\documentclass[11pt]{amsart}
\usepackage{amsmath}
\usepackage{amssymb}
\usepackage{amscd}

\def\NZQ{\mathbb}               
\def\NN{{\NZQ N}}
\def\QQ{{\NZQ Q}}
\def\ZZ{{\NZQ Z}}
\def\RR{{\NZQ R}}

\def\PP{{\NZQ P}}

%
%
%
%

\newtheorem{Theorem}{Theorem}[section]
\newtheorem{Lemma}[Theorem]{Lemma}
\newtheorem{Corollary}[Theorem]{Corollary}
\newtheorem{Proposition}[Theorem]{Proposition}
\newtheorem{Remark}[Theorem]{Remark}

\newtheorem{Example}[Theorem]{Example}

\newtheorem{Definition}[Theorem]{Definition}

%
%
\let\epsilon\varepsilon
\let\phi=\varphi
\let\kappa=\varkappa

%
%
\textwidth=15cm \textheight=22cm \topmargin=0.5cm
\oddsidemargin=0.5cm \evensidemargin=0.5cm \pagestyle{plain}
\begin{document}
\title{Examples of multiplicities and mixed multiplicities of filtrations}
\author{Steven Dale Cutkosky}

\thanks{Partially supported by NSF grant DMS-1700046.}

\address{Steven Dale Cutkosky, Department of Mathematics,
University of Missouri, Columbia, MO 65211, USA}
\email{cutkoskys@missouri.edu}

\subjclass[2000]{Primary 13H15, 14C20}
\date{January 11, 2020.}

\dedicatory{This paper is dedicated to Roger and Sylvia Wiegand on the occasion of their combined 150th birthday.}

\keywords{multiplicity, mixed multiplicity, divisorial valuation, filtration}

\begin{abstract} In this paper we construct examples of irrational behavior of multiplicities and mixed multiplicities of divisorial filtrations. The construction makes essential use of  anti-positive intersection products.
\end{abstract}

\maketitle

\section{Introduction}

In this paper, we begin by giving an overview of  the theory of multiplicities and
 mixed multiplicities of  (not necessarily Noetherian) filtrations, including an interpretation of multiplicities and mixed multiplicities of divisorial filtrations as anti-positive intersection multiplicities. Using this interpretation, we construct a resolution of singularities of a normal three dimensional local ring and compute the multiplicities and mixed multiplicities of its divisorial filtrations, showing essentially irrational behavior.

The study of mixed multiplicities of $m_R$-primary ideals in a Noetherian local ring $R$ with maximal ideal $m_R$  was initiated by Bhattacharya \cite{Bh}, Rees  \cite{R} and Teissier  and Risler \cite{T1}. 
In \cite{CSS}  the notion of mixed multiplicities  is extended to arbitrary,  not necessarily Noetherian, filtrations of $R$ by $m_R$-primary ideals. It is shown in \cite{CSS} that many basic theorems for mixed multiplicities of $m_R$-primary ideals are true for filtrations. 

The development of the subject of mixed multiplicities and its connection to Teissier's work on equisingularity \cite{T1} can be found in \cite{GGV}.   A  survey of the theory of  mixed multiplicities of  ideals  can be found in  \cite[Chapter 17]{HS}, including discussion of the results of  the papers \cite{R1} of Rees and \cite{S} of  Swanson, and the theory of Minkowski inequalities of Teissier \cite{T1}, \cite{T2}, Rees and Sharp \cite{RS} and Katz \cite{Ka}.   Later, Katz and Verma \cite{KV}, generalized mixed multiplicities to ideals which are not all $m_R$-primary.

Let $R$ be a Noetherian local ring of dimension $d$ with maximal ideal $m_R$.
Let $\ell_R(M)$ denote the length of an $R$-module $M$.

A filtration $\mathcal I=\{I_n\}_{n\in\NN}$ of  a ring $R$ is a descending chain
$$
R=I_0\supset I_1\supset I_2\supset \cdots
$$
of ideals such that $I_iI_j\subset I_{i+j}$ for all $i,j\in \NN$.  A filtration $\mathcal I=\{I_n\}$ of  a local ring $R$ by $m_R$-primary ideals is a filtration $\mathcal I=\{I_n\}_{n\in\NN}$ of $R$ such that   $I_n$ is $m_R$-primary for $n\ge 1$.
A filtration $\mathcal I=\{I_n\}_{n\in\NN}$ of  a ring $R$ is said to be Noetherian if $\bigoplus_{n\ge 0}I_n$ is a finitely generated $R$-algebra.

The nilradical $N(R)$ of  $R$ is 
$$
N(R)=\{x\in R\mid x^n=0 \mbox{ for some positive integer $n$}\}.
$$
We have that $\dim N(R)=d$ if and only if there exists a minimal prime $P$ of $R$ such that $\dim R/P =d$ and $R_P$ is not reduced.  Let $\hat R$ be the $m_R$-adic completion of $R$.

In \cite[Theorem 1.1]{C2} and  \cite[Theorem 4.2]{C3} we have shown that     the limit 
\begin{equation}\label{I5}
\lim_{n\rightarrow\infty}\frac{\ell_R(R/I_n)}{n^d}
\end{equation}
exists for any filtration  $\mathcal I=\{I_n\}$ of $R$ by $m_R$-primary ideals if and only if $\dim N(\hat R)<d$.
We observe that the condition $\dim N(\hat R)<d$ holds if $R$ is analytically unramified; that is, $\hat R$ is reduced.

The problem of existence of such limits (\ref{I5}) has been considered by Ein, Lazarsfeld and Smith \cite{ELS} and Musta\c{t}\u{a} \cite{Mus}.
When the ring $R$ is a domain and is essentially of finite type over an algebraically closed field $k$ with $R/m_R=k$, Lazarsfeld and Musta\c{t}\u{a} \cite{LM} showed that
the limit exists for all filtrations  of $R$ by $m_R$-primary ideals.  Cutkosky proved it in the complete generality  stated above in \cite{C2} and  \cite{C3}. These proofs use the theory of volumes of cones of Okounkov \cite{Ok}, Kaveh and Khovanskii \cite{KK} and  Lazarsfeld and Musta\c{t}\u{a} \cite{LM}.

 We now impose the necessary condition that 
the dimension of the nilradical $N(\hat R)$ of the completion $\hat R$ of $R$ is less than the dimension of $R$, to insure the existence of limits.
We define the multiplicity of $R$ with respect to a filtration $\mathcal I=\{I_n\}$ of $mR$-primary ideal to be 
$$
e_R(\mathcal I;R)=
\lim_{n\rightarrow \infty}\frac{\ell_R(R/I_n)}{n^d/d!}.
$$
In the case that $\mathcal I=\{I^n\}_{n\in\NN}$ is  the filtration of powers of a fixed $m_R$-primary ideal $I$, the filtration $\mathcal I$ is Noetherian, and we have that
$$
e_R(\mathcal I;R)=e_R(I;R)
$$
is the ordinary multiplicity of $R$ with respect to the ideal $R$ (here we use the notation $e_R(I;R)$ of \cite{HS}).

Mixed multiplicities of filtrations are defined in \cite{CSS}. 
 Let $M$ be a finitely generated $R$-module where $R$ is a $d$-dimensional Noetherian local ring with $\dim N(\hat R)<d$. Let $\mathcal I(1)=\{I(1)_n\},\ldots, \mathcal I(r)=\{I(r)_n\}$ be filtrations of $R$ by $m_R$-primary ideals. 
 In  \cite[Theorem 6.1]{CSS} and  \cite[Theorem 6.6]{CSS}, it is shown that the function
\begin{equation}\label{M2}
P(n_1,\ldots,n_r)=\lim_{m\rightarrow \infty}\frac{\ell_R(M/I(1)_{mn_1}\cdots I(r)_{mn_r}M)}{m^d}
\end{equation}
is equal to a homogeneous polynomial $G(n_1,\ldots,n_r)$ of total degree $d$ with real coefficients for all  $n_1,\ldots,n_r\in\NN$.  

We  define the mixed multiplicities of $M$ from the coefficients of $G$, generalizing the definition of mixed multiplicities for $m_R$-primary ideals. Specifically,   
 we write 
\begin{equation}\label{eqV6}
G(n_1,\ldots,n_r)=\sum_{d_1+\cdots +d_r=d}\frac{1}{d_1!\cdots d_r!}e_R(\mathcal I(1)^{[d_1]},\ldots, \mathcal I(r)^{[d_r]};M)n_1^{d_1}\cdots n_r^{d_r}.
\end{equation}
We say that $e_R(\mathcal I(1)^{[d_1]},\ldots,\mathcal I(r)^{[d_r]};M)$ is the mixed multiplicity of $M$ of type $(d_1,\ldots,d_r)$ with respect to the filtrations $\mathcal I(1),\ldots,\mathcal I(r)$.
Here we are using the notation 
\begin{equation}\label{eqI6}
e_R(\mathcal I(1)^{[d_1]},\ldots, \mathcal I(r)^{[d_r]};M)
\end{equation}
  to be consistent with the classical notation for mixed multiplicities of $M$ with respect to $m_R$-primary ideals from \cite{T1}. The mixed multiplicity of $M$ of type $(d_1,\ldots,d_r)$ with respect to $m_R$-primary ideals $I_1,\ldots,I_r$, denoted by 
  $$
  e_R(I_1^{[d_1]},\ldots,I_r^{[d_r]};M)
  $$ (\cite{T1}, \cite[Definition 17.4.3]{HS}) is equal to the mixed multiplicity 
  $$
  e_R(\mathcal I(1)^{[d_1]},\ldots,\mathcal I(r)^{[d_r]};M),
  $$
   where the Noetherian $I$-adic filtrations $\mathcal I(1),\ldots,\mathcal I(r)$ are defined by 
  $$
  \mathcal I(1)=\{I_1^i\}_{i\in \NN}, \ldots,\mathcal I(r)=\{I_r^i\}_{i\in \NN}.
  $$

We have that 
\begin{equation}\label{eqX31}
e_R(\mathcal I;M)=e_R(\mathcal I^{[d]};M)
\end{equation}
 if $r=1$, and $\mathcal I=\{I_i\}$ is a filtration of  $R$ by $m_R$-primary ideals. Thus
$$
e_R(\mathcal I;M)=\lim_{m\rightarrow \infty}\frac{\ell_R(M/I_mM)}{m^d/d!}.
$$

In \cite{CSS}, it is shown that many classical theorems about mixed multiplicities of $m_R$-primary ideals continue to hold for filtrations. For instance, the four ``Minkowski inequalities'' for mixed multiplicities are proven in \cite[Theorem 6.3]{CSS}. 

\begin{Theorem}(\cite[Theorem 6.3]{CSS}, Minkowski Inequalities)  Suppose that $R$ is a Noetherian $d$-dimensional  local ring with $\dim N(\hat R)<d$, $M$ is a finitely generated $R$-module and $\mathcal I(1)=\{I(1)_j\}$ and $\mathcal I(2)=\{I(2)_j\}$ are filtrations of $R$ by $m_R$-primary ideals. Then 
\begin{enumerate}
\item[1)] For $1\le i\le d-1$,
$$
\begin{array}{l}
e_R(\mathcal I(1)^{[i]},\mathcal I(2)^{[d-i]};M)^2\\
\le e_R(\mathcal I(1)^{[i+1]},\mathcal I(2)^{[d-i-1]};M)e_R(\mathcal I(1)^{[i-1]},\mathcal I(2)^{[d-i+1]};M)\end{array}
$$
\item[2)]  For $0\le i\le d$, 
$$
e_R(\mathcal I(1)^{[i]},\mathcal I(2)^{[d-i]};M)e_R(\mathcal I(1)^{[d-i]},\mathcal I(2)^{[i]};M)\le e_R(\mathcal I(1);M)e_R(\mathcal I(2);M),
$$
\item[3)] For $0\le i\le d$, $e_R(\mathcal I(1)^{[d-i]},\mathcal I(2)^{[i]};M)^d\le e_R(\mathcal I(1);M)^{d-i}e_R(\mathcal I(2);M)^i$ and
\item[4)]  $e_R(\mathcal I(1)\mathcal I(2));M)^{\frac{1}{d}}\le e_R(\mathcal I(1);M)^{\frac{1}{d}}+e_R(\mathcal I(2);M)^{\frac{1}{d}}$, 

where $\mathcal I(1)\mathcal I(2)=\{I(1)_jI(2)_j\}$.
\end{enumerate}
\end{Theorem}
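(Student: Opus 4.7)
The plan is to derive 2)--4) as formal algebraic consequences of 1) using the polynomial expansion (\ref{eqV6}), and then to establish 1) itself by approximating the filtrations $\mathcal I(j)$ by the Noetherian $I(j)_k$-adic filtrations and invoking the classical Minkowski inequality for $m_R$-primary ideals.

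For the formal step, abbreviate $f_j := e_R(\mathcal I(1)^{[j]}, \mathcal I(2)^{[d-j]}; M)$, so that $f_d = e_R(\mathcal I(1); M)$ and $f_0 = e_R(\mathcal I(2); M)$. Inequality 1) is the log-convexity statement $f_j^2 \le f_{j-1} f_{j+1}$ for $1 \le j \le d-1$. Since $\log f_j$ is then a convex function on $\{0,1,\ldots,d\}$, the chord bound gives $f_j \le f_0^{(d-j)/d} f_d^{j/d}$, which raised to the $d$-th power is 3). Applying 3) at $j$ and at $d-j$ and multiplying yields 2). Setting $n_1 = n_2 = 1$ in (\ref{eqV6}) and using the definition of the product filtration gives the identity
\[
e_R(\mathcal I(1)\mathcal I(2); M) = \sum_{j=0}^{d} \binom{d}{j} f_j,
\]
and 4) then follows from 3) combined with the binomial expansion of $(f_0^{1/d} + f_d^{1/d})^d$.

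For 1), I would apply the classical Minkowski inequality for $m_R$-primary ideals---due to Teissier, Rees--Sharp, and Katz, and valid in our setting $\dim N(\hat R) < d$---to the ideals $J_\nu(k) := I(\nu)_k$ for $\nu \in \{1,2\}$, then divide both sides by $k^{2d}$ and pass to the limit $k \to \infty$. The crucial ingredient is the convergence
\[
\lim_{k \to \infty} \frac{e_R(I(1)_k^{[d_1]}, I(2)_k^{[d_2]}; M)}{k^d} = e_R(\mathcal I(1)^{[d_1]}, \mathcal I(2)^{[d_2]}; M).
\]
One direction of this limit is comparatively easy: writing $G_k$ for the polynomial (\ref{eqV6}) attached to the Noetherian filtrations $\{I(\nu)_k^n\}_n$, the inclusion $I(\nu)_k^n \subseteq I(\nu)_{kn}$ yields $\ell_R(M/I(1)_k^{mn_1}I(2)_k^{mn_2}M) \ge \ell_R(M/I(1)_{kmn_1}I(2)_{kmn_2}M)$, so after dividing by $m^d$ and sending $m \to \infty$ one obtains the pointwise bound $G_k(n_1, n_2) \ge k^d G(n_1, n_2)$ on $\NN^2$.

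The main technical obstacle, and the heart of the argument, is the matching asymptotic upper bound $G_k/k^d \le G + o(1)$ as $k \to \infty$. Once the pointwise convergence $G_k/k^d \to G$ on $\NN^2$ is established, evaluating at $d+1$ ratios $(n_1 : n_2)$ and inverting a Vandermonde system shows that the coefficients of $G_k/k^d$ converge to those of $G$, giving the displayed mixed-multiplicity convergence. I expect to extract this upper bound from the Okounkov-body representation of filtration multiplicities developed in \cite{LM}, \cite{C2}, \cite{C3}, and \cite{CSS}, in which both $G$ and the limiting values of $G_k/k^d$ are expressed as (mixed) volumes of convex bodies, with the bodies attached to the Noetherian truncations $\{I(\nu)_k^n\}_n$ exhausting those attached to $\mathcal I(\nu)$ as $k \to \infty$.
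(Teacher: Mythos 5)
The paper does not prove this statement; it quotes \cite[Theorem~6.3]{CSS}, so there is no ``paper's own proof'' here to line up against. Judged on its own terms, your reduction of 2)--4) to 1) is correct and is exactly the standard route (the paper explicitly notes that ``inequalities 2)--4) can be deduced directly from inequality 1)''); the log-convexity argument, the identity $e_R(\mathcal I(1)\mathcal I(2);M)=\sum_j\binom{d}{j}f_j$, and the binomial comparison are all fine, modulo the usual care when some $f_j$ vanishes. Your route to 1) (approximate the filtration by a Noetherian one built from $I(\nu)_k$, apply the classical Minkowski inequality, pass to the limit) is also very close in spirit to what \cite{CSS} actually does, the main cosmetic difference being that \cite{CSS} uses the \emph{truncated} filtrations $\mathcal I_a$ (whose multiplicities are shown in \cite[Proposition~4.3]{CSS} to converge to those of $\mathcal I$) rather than your coarser $\{I(\nu)_k^n\}_n$. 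You are right that the Fujita-type convergence $G_k/k^d\to G$ for the full polynomial of mixed multiplicities is the crux; this is precisely what the Okounkov-body machinery of \cite{C2}, \cite{C3}, \cite{CSS} is built to supply, and it is not a step one can wave away.

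There is, however, one genuine gap you should not brush past. You invoke the classical Minkowski inequality for $m_R$-primary ideals as ``valid in our setting $\dim N(\hat R)<d$,'' citing Teissier, Rees--Sharp, and Katz. Those theorems are proved under the hypothesis that $R$ is \emph{formally equidimensional}, which is genuinely incomparable with $\dim N(\hat R)<d$: the ring $k[[x,y]]/(x^2)$ is formally equidimensional but has $\dim N(\hat R)=d$, while $k[[x,y,z]]/(xy,xz)$ has $\dim N(\hat R)<d$ but is not equidimensional. To close this, after passing to $\hat R$ you should use the associativity (additivity) formula $e_R(I^{[i]},J^{[d-i]};M)=\sum_P\ell_{\hat R_P}(\hat M_P)\,e_{\hat R/P}(I^{[i]},J^{[d-i]})$ over minimal primes $P$ with $\dim\hat R/P=d$; under $\dim N(\hat R)<d$ each $\hat R_P$ is a field, each $\hat R/P$ is a complete local domain and hence is formally equidimensional, so Rees--Sharp applies on each summand, and the Cauchy--Schwarz inequality $(\sum_P e_j(P))^2\le(\sum_P e_{j-1}(P))(\sum_P e_{j+1}(P))$ transfers the log-convexity of the summands to the total. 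Without this step your appeal to the classical theorem is not justified by the hypotheses at hand.
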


The Minkowski inequalities were formulated and proven for  $m_R$-primary ideals by Teissier \cite{T1}, \cite{T2} and proven in full generality, for $m_R$-primary ideals in Noetherian local rings,  by Rees and Sharp \cite{RS}. The fourth inequality, 
was proven for filtrations  of $R$ by $m_R$-primary ideals in a regular local ring with algebraically closed residue field by Musta\c{t}\u{a} (\cite[Corollary 1.9]{Mus}) and more recently by Kaveh and Khovanskii (\cite[Corollary 7.14]{KK1}). The inequality 4) was proven with our assumption that $\dim N(\hat R)<d$ in \cite[Theorem 3.1]{C3}.

Another important property is that the mixed multiplicities are always nonnegative (\cite[Proposition 1.3]{CSV}).
In contrast, the mixed multiplicities are strictly positive in the classical case of $m_R$-primary ideals (\cite{T1} or \cite[Corollary 17.4.7]{HS}).

Suppose that $R$ is a $d$-dimensional excellent local domain, with quotient field $K$. A valuation $\nu$ of $K$ is called an $m_R$-valuation if $\nu$ dominates $R$ ($R\subset V_{\nu}$ and $m_{\nu}\cap R=m_R$ where $V_{\nu}$ is the valuation ring of $\nu$ with maximal ideal $m_{\nu}$) and ${\rm trdeg}_{R/m_R}V_{\nu}/m_{\nu}=d-1$.

Suppose that $I$ is an ideal in $R$. Let $X$ be the normalization of the blowup of $I$, with projective birational morphism $\phi:X\rightarrow \mbox{Spec}(R)$. Let $E_1,\ldots,E_t$ be the irreducible components of $\phi^{-1}(V(I))$ (which necessarily have dimension $d-1$). The Rees valuations of $I$ are the discrete valuations $\nu_i$ for $1\le i\le t$ with valuation rings $V_{\nu_i}=\mathcal O_{X,E_i}$. If $R$ is normal, then $X$ is equal to the blowup of the integral closure $\overline{I^s}$ of an appropriate power $I^s$ of $I$.

Every Rees valuation $\nu$ that dominates $R$  is an $m_R$-valuation and every $m_R$-valuation is a Rees valuation of an $m_R$-primary ideal by \cite[Statement  (G)]{R3}.

Associated to an $m_R$-valuation $\nu$ are valuation ideals
\begin{equation}\label{eqX2}
I(\nu)_n=\{f\in R\mid \nu(f)\ge n\}
\end{equation}
for $n\in \NN$.
In general, the filtration $\mathcal I(\nu)=\{I(\nu)_n\}$ is not Noetherian. 
In  a two-dimensional normal local ring $R$, the condition that the  filtration of valuation ideals of $R$ is Noetherian for all  $m_R$-valuations dominating $R$ is the condition (N) of Muhly and Sakuma \cite{MS}. It is proven in \cite{C8} that a complete normal local ring of dimension two satisfies condition (N) if and only if its divisor class group is a torsion group. 
An example is given in \cite{CGP} of an $m_R$-valuation of a 3-dimensional regular local ring $R$ such that the filtration is not Noetherian. 

\begin{Definition}\label{DefDF} Suppose that $R$ is an excellent local domain. We
say that a filtration $\mathcal I$ of $R$ by $m_R$-primary ideals is a divisorial filtration if there exists a projective birational morphism $\phi:X\rightarrow \mbox{Spec}(R)$ such that $X$ is the normalization of the blowup of an $m_R$-primary ideal and there exists a nonzero effective Cartier divisor $D$ on $X$ with exceptional support for $\phi$ such that 
$\mathcal I=\{I(mD)\}_{m\in\NN}$ where
\begin{equation}\label{eqX22}
I(mD)=\Gamma(X,\mathcal O_X(-mD))\cap R .
\end{equation}
\end{Definition}
We will write 
$$
\mathcal I(D)=\{I(mD)\}_{m\in \NN}.
$$
If $R$ is normal, then $I(mD)=\Gamma(X,\mathcal O_X(-mD))$.
If $D=\sum_{i=1}^ta_iE_i$ where the $a_i\in \NN$ and the $E_i$ are prime exceptional divisors of $\phi$, with associated $m_R$-valuations $\nu_i$, then 
$$
I(mD)=I(\nu_1)_{a_1m}\cap\cdots\cap I(\nu_t)_{a_tm}.
$$

Rees has shown in \cite{R} that if $R$ is a formally equidimensional Noetherian local ring and $I\subset I'$ are $m_R$-primary ideals such that $e_R(I;R)=e_R(I';R)$, then $\bigoplus_{n\ge 0}(I')^n$ is integral over $\bigoplus_{n\ge 0}I^n$ ($I$ and $I'$ have the same integral closure). An exposition of this converse to the above cited \cite[Proposition 11.2.1]{HS} is given in   \cite[Proposition 11.3.1]{HS}, in the section entitled ``Rees's Theorem''. Rees's theorem is not true in general for filtrations of $m_R$-primary ideals (a simple example in a regular local ring is given in \cite{CSS}) but it is true for divisorial filtrations.

In Theorem \cite[Theorem 3.5]{C5}, it is shown that Rees's theorem is true for divisorial filtrations of an excellent local domain.

\begin{Theorem}(\cite[Theorem 1.4 and Theorem 3.5]{C5})  
Suppose that $R$ is a $d$-dimensional excellent local domain. Let $\phi:X\rightarrow \mbox{Spec}(R)$ be the normalization of the blowup of an $m_R$-primary ideal. Suppose that $D_1$ and $D_2$ are effective Cartier divisors on $X$ with exceptional support such that $D_1\le D_2$. Then
 $$
 e_R(\mathcal I(D_1);R)=e_R(\mathcal I(D_2);R)
 $$
  if and only if 
$$
I(mD_1)=I(mD_2)\mbox{ for all }m\in \NN.
$$

\end{Theorem}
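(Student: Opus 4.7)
The ``if'' direction is immediate. For the ``only if'' direction, I would begin by reducing to the case where $R$ is normal: since $R$ is excellent its normalization $R'$ is module-finite, the morphism $\phi$ factors through $\mbox{Spec}(R')$, and the multiplicities of divisorial filtrations transform predictably under this finite extension. Assuming $R$ is normal, $I(mD) = \Gamma(X,\mathcal O_X(-mD))$, and the assumption $D_1\le D_2$ gives $I(mD_2)\subseteq I(mD_1)$, so
\begin{equation*}
\ell_R\bigl(I(mD_1)/I(mD_2)\bigr)=\ell_R(R/I(mD_2))-\ell_R(R/I(mD_1)).
\end{equation*}
Dividing by $m^d/d!$ and invoking the hypothesis that both multiplicities exist (by \cite[Theorem 1.1]{C2}) and are equal, one obtains $\ell_R(I(mD_1)/I(mD_2))=o(m^d)$.

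The core of the proof is to upgrade this asymptotic vanishing to the pointwise equality $I(mD_1)=I(mD_2)$ for every $m$. My plan is to use the interpretation of $e_R(\mathcal I(D))$ as (a multiple of) the anti-positive self-intersection number $-\langle D^d\rangle$ on a suitable birational model over $X$, which is the technical framework developed in the earlier sections. Under this interpretation, the hypothesis becomes the equality $\langle D_1^d\rangle = \langle D_2^d\rangle$ of anti-positive intersection numbers, and it suffices to establish the following strict-monotonicity statement: if $D_1\le D_2$ are effective Cartier divisors with exceptional support on $X$ and $I(m_0D_1)\ne I(m_0D_2)$ for some $m_0$, then $\langle D_1^d\rangle < \langle D_2^d\rangle$ strictly.

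A complementary direct attack on the pointwise equality, which I would try first, is to take a witness $f\in I(m_0D_1)\setminus I(m_0D_2)$, note that if $D_2=\sum b_iE_i$ with associated $m_R$-valuations $\nu_i$ then $\nu_j(f)<b_jm_0$ for some $j$, and then use the injection
\begin{equation*}
I((m-m_0)D_1)\,\bigm/\,\bigl(I((m-m_0)D_1)\cap(I(mD_2):f)\bigr)\ \hookrightarrow\ I(mD_1)/I(mD_2)
\end{equation*}
to force a lower bound of order $m^d$ on the right-hand length. Computing the colon ideal via valuations shows that the constraint imposed at $\nu_j$ is strictly stronger than the constraint imposed by $\mathcal I(D_1)$, which is why one expects strict positivity.

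The main obstacle is precisely the strict-monotonicity property of the anti-positive self-intersection, which is a local and anti-positive analogue of the classical fact that the volume of a big divisor strictly increases upon adding an effective component whose global sections actually contribute. The approach is via a local Fujita-approximation argument: approximate the divisorial filtrations $\mathcal I(D_i)$ by Noetherian subfiltrations coming from relatively ample divisors on a sequence of birational models $Y_n\to X$, so that $e_R(\mathcal I(D_i))$ is a limit of genuine top self-intersection numbers on $Y_n$. Any witness $f$ as above produces, upon multiplication by suitably chosen sections of the ample approximations, a family of genuinely new sections at each sufficiently good $Y_n$, which in the limit forces a strict gap between the two anti-positive self-intersection numbers. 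Making the Fujita approximation work compatibly with the anti-positive intersection calculus, and extracting from it the quantitative strict-monotonicity, is the technical heart of the argument; once it is in hand, the contradiction with $\ell_R(I(mD_1)/I(mD_2))=o(m^d)$ is immediate and completes the proof.
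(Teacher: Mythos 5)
Your reduction to the normal case and the elementary observation that $\ell_R(I(mD_1)/I(mD_2))=o(m^d)$ are both correct, and they match the way the paper's proof ([C6, Theorem 3.4 and Theorem 4.5]) is organized. However, the core of your proposal --- upgrading $o(m^d)$ to actual equality of ideals --- has a real gap, and the paper takes a quite different route through it.

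The paper's key structural lemma is that $I(mD)=\Gamma(X,\mathcal O_X(-\lceil m\sum_i\gamma_{E_i}(D)E_i\rceil))$ for all $m$, where $\gamma_{E_i}(D)=\inf_m \tau_{E_i,m}(D)/m$ is the asymptotic order of vanishing of sections of $I(mD)$ along $E_i$. So the filtration $\mathcal I(D)$ is completely determined by the real vector $(\gamma_{E_1}(D),\ldots,\gamma_{E_r}(D))$, and it suffices to show the hypothesis forces $\gamma_{E_i}(D_1)=\gamma_{E_i}(D_2)$ for all $i$. The paper establishes this by building a Newton--Okounkov body $\Delta_\delta(D)$ from a flag through each $E_i$, observing that $D_1\le D_2$ gives $\Delta_\delta(D_1)\subset\Delta_\delta(D_2)$, that both have positive volume (a local analogue of bigness, which is where the positivity in your ``sections actually contribute'' intuition lives), that equality of multiplicities forces equality of volumes and hence, by compactness and convexity, equality of the two bodies, and that $\gamma_{E_i}$ is exactly the minimum of the first coordinate on the body built from a flag through $E_i$. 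None of this requires a separate strict-monotonicity theorem for anti-positive intersection products or a Fujita approximation.

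Your witness-section attack in fact points in the right direction but stalls exactly where the Okounkov argument does the work. A witness $f\in I(m_0D_1)\setminus I(m_0D_2)$ does yield, after a short computation using $\nu_j(f)\ge m_0\gamma_{E_j}(D_1)$ (true for every $f\in I(m_0D_1)$) and $\nu_j(f)<m_0\gamma_{E_j}(D_2)$ (since $f\notin I(m_0D_2)$), the inequality $\gamma_{E_j}(D_1)<\gamma_{E_j}(D_2)$ for some $j$. But turning this into a lower bound of order $m^d$ on $\ell_R(I(mD_1)/I(mD_2))$ via the injection you wrote down is circular: the asymptotic growth rate of that length is precisely $e_R(\mathcal I(D_2))-e_R(\mathcal I(D_1))$ up to a bounded correction, which you have already assumed to vanish. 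The strict positivity you would need at that point is equivalent to the assertion ``different $\gamma$'s force different multiplicities,'' which is the theorem in contrapositive form, and you have not supplied an independent proof of it. The Fujita-approximation sketch gestures at the right phenomenon, but it is not spelled out enough to see why multiplying the witness by sections of the relatively ample approximations produces enough new sections of $\mathcal O_{Y_n}(-mD_1)$ to force a positive gap in the limit, and as noted, the bound you would obtain from this has the same circularity problem unless the positivity is furnished separately. In the paper this positivity is supplied cleanly by the observation that the Okounkov bodies are compact convex sets of positive $d$-dimensional volume, so that a nontrivial inclusion $\Delta_\delta(D_1)\subsetneq\Delta_\delta(D_2)$ forces a strict volume gap.
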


An algebraic local ring is a local domain which is essentially of finite type over a field. 
Let $R$ be a $d$-dimensional normal algebraic local ring and $\phi:X\rightarrow \mbox{Spec}(R)$ be the  blow up of an $m_R$-primary ideal of $R$ such that $X$ is normal. 

In \cite{C5}, anti-positive intersection products $\langle(-D_1)^{d_1}\cdot\ldots \cdot (-D_r)^{d_r}\rangle$
where $D_1,\ldots,D_r$ are effective Cartier divisors on $X$ with exceptional support
 are defined, generalizing the positive intersection product of Cartier divisors defined on projective varieties in \cite{BFJ} over an algebraically closed field of characteristic zero and in \cite{C4} over an arbitrary field. The anti-positive intersection multiplicities have the property that if $d_1+\cdots+d_r=d$, then $\langle (-D_1)^{d_1}\cdot,\ldots,\cdot (-D_r)^{d_r}\rangle$ is a non positive real number.

 It is shown in \cite[Theorem 8.3]{C5} that  we have  identities
$$
e_R(\mathcal I(D_1)^{[d_1]},\ldots,\mathcal I(D_r)^{[d_r]};R)=-\langle(-D_1)^{d_1}\cdot,\ldots,\cdot (-D_r)^{d_r}\rangle.
$$
In particular, $e_R(\mathcal I(D);R)=-\langle(-D)^d\rangle$.  Thus by (\ref{eqV6}), we have that 
 
 \begin{equation}\label{eq10}
 \begin{array}{l}
\lim_{m\rightarrow \infty}  \frac{\ell_R(R/I(mn_1D_1)\cdots\mathcal I(mn_rD_r))}{m^d}\\
  =-\sum_{d_1+\cdots +d_r=d}\frac{1}{d_1!\cdots d_r!}\langle(D_1)^{d_1}\cdot\ldots\cdot (-D_r)^{d_r}\rangle
n_1^{d_1}\cdots n_r^{d_r}.
\end{array}
\end{equation} 
 From the case $r=1$, we obtain
 \begin{equation}\label{eq11}
 \lim_{m\rightarrow \infty}\frac{\ell_R(R/I(mD))}{m^d}=-\frac{\langle(-D)^d\rangle}{d!}.
 \end{equation}
 The interpretation of mixed multiplicities as anti-positive intersection multiplicities is particularly useful in the calculation of examples. This is the method we use in the example constructed in this paper.

The multiplicity of a ring with respect to a non Noetherian filtration can be an irrational number. 
The following is a very simple example of a filtration of $m_R$-primary ideals such that the multiplicity is not rational. Let $k$ be a field and $R=k[[x]]$ be a power series ring over $k$. Let $I_n =(x^{\lceil n\sqrt{2}\rceil})$   
where $\lceil \alpha\rceil$ is the round up of a real number $\alpha$ (the smallest integer which is greater than or equal to $\alpha$).  Then $\mathcal I=\{I_n\}$ is a filtration of $m_R$-primary ideals such that 
$$
e_R(\mathcal I;R)=\lim_{n\rightarrow\infty}\frac{\ell_R(R/I_n)}{n} =\sqrt{2}
$$ 
is an irrational number. 

There are also irrational examples determined by the valuation ideals of a  discrete valuation. In Example 6 of \cite{CS} an example is given of a normal 3 dimensional  local ring $R$ which is essentially of finite type over a field of arbitrary characteristic and a divisorial valuation $\nu$ on the quotient field of $R$ which dominates $R$ such that the filtration of $m_R$-primary ideals $\mathcal I(\nu)=\{I(\nu)_n\}$
defined by 
$$
I(\nu)_n=\{f\in R\mid \nu(f)\ge n\}
$$
satisfies that the limit 
$$
e_R(\mathcal I(\nu);R)=lim_{n\rightarrow\infty}\frac{\ell_R(R/I_n)}{n^3/3!} 
$$ 
is irrational.  In this paper we give an example of this behavior in (\ref{eq30}), and give examples of irrational mixed multiplicities.

We define a multigraded filtration $\mathcal I=\{I_{n_1,\ldots,n_r}\}_{n_1,\ldots,n_r\in\NN}$ of ideals on a ring $R$ to be a collection of ideals of $R$ such that
 $R=I_{0,\ldots,0}$, 
 $$
 I_{n_1,\ldots,n_{n_{j-1}},n_j+1,n_{j+1},\ldots,n_r}\subset I_{n_1,\ldots,n_{j-1},n_j,n_{j+1},\ldots,n_r}
 $$
  for all $n_1,\ldots,n_r\in\NN$ and
 $$
 I_{a_1,\ldots,a_r}I_{b_1,\ldots,b_r}\subset I_{a_1+b_1,\ldots,a_r+b_r}
 $$
  whenever $a_1,\ldots,a_r,b_1,\ldots,b_r\in \NN$.
 
 A multigraded filtration
 $\mathcal I=\{I_{n_1,\ldots,n_r}\}$ of ideals on a local ring $R$ is a  multigraded filtration of $R$ by $m_R$-primary ideals if $I_{n_1,\ldots,n_r}$ is $m_R$-primary whenever $n_1+\cdots+n_r>0$.

 In \cite[Section 7]{CSS}, we  give an example showing that  the mixed multiplicities of filtrations (explained between (\ref{M2}) and (\ref{eqV6}) of this paper) do not have a good extension to arbitrary multigraded non Noetherian filtrations $\mathcal I=\{I_{n_1,\ldots,n_r}\}$ of $m_R$-primary ideals, even in a power series ring in one variable over a field. In the  example in \cite{CSS}, we have $d=1$ and the function
 $$
 P(n_1,n_2)=\lim_{m\rightarrow \infty}\frac{\ell_R(R/I_{mn_1,mn_2})}{m}=\lceil \sqrt{n_1^2+n_2^2}\rceil
$$ 
 for $n_1,n_2\in \NN$, where $\lceil x\rceil$ is the round up of a real number $x$. The function $P(n_1,n_2)$ is  far from polynomial like. 
 
 However, it is shown  in \cite[Section 7]{CSS} that the function $P(n_1,\ldots,n_r)$ is polynomial like in an important situation. 
 Let $R$  be an excellent, normal, two dimensional local ring, and $\phi:X\rightarrow \mbox{Spec}(R)$ be a resolution of singularities. Let $E_1,\ldots,E_r$ be the irreducible exceptional divisors of $\phi$, and let
 $\{I_{n_1,\ldots,n_r}\}$ be the
   multigraded filtration  of $m_R$-primary ideals defined by
   $$
   I_{n_1,\ldots,n_r}=I(n_1E_1+\cdots +n_rE_r).
   $$
   It is shown in \cite[Section 7]{CSS} (using some results from \cite{CHR}) that
  the function
 \begin{equation}\label{eq21}
 P(n_1,\ldots,n_r)=\lim_{m\rightarrow \infty}\frac{\ell_R(R/I_{mn_1,\ldots,mn_r})}{m^2}
\end{equation} 
is  a piecewise rational polynomial  function on an  abstract complex of rational polyhedral sets whose union is $(\QQ_{\ge 0})^r$. This holds, even though the  filtration $\{I_{n_1,\ldots,n_r}\}$ is generally not Noetherian.

The function $P(n_1,\ldots,n_r)$ of (\ref{eq21}) is in fact  given by the anti-intersection product (\ref{eq11}) on the resolution of singularities. The anti-positive intersection product of (\ref{eq11}) is in this case the ordinary intersection product of the Zariski decomposition of $-D$ (where $D=n_1E_1+\cdots+n_rE_r$).

In Theorem  \ref{Theorem2} below, we give an example of such a function on a resolution of singularities of a normal three dimensional excellent local ring with two irreducible exceptional divisors such that the function $P(n_1,n_2)$
is a piecewise polynomial function on an abstract complex of polyhedral sets, but the polynomials and the polyhedral sets are not rational.

 In this paper, we compute  the functions 
 $$
 \lim_{m\rightarrow\infty}
  \frac{\ell_R(R/I(mD))}{m^3}\mbox{  and }\lim_{m\rightarrow\infty}\frac{\ell_R(R/I(mn_1D_1)I(mn_2D_2))}{m^d}
  $$  
  and the associated multiplicities and mixed multiplicities 
  $$
  e_R(\mathcal I(D);R)\mbox{ and }e_R(\mathcal I(D_1),\mathcal I(D_2);R)
  $$
in a  specific example.  We compute the anti-positive intersection numbers in order to determine these functions. Let $k$ be an algebraically closed field. We construct a 3-dimensional normal algebraic local ring $R$ over $k$ and the blow up $\phi:X\rightarrow \mbox{Spec}(R)$  of an $m_R$-primary ideal such that $X$ is nonsingular with two  irreducible exceptional divisors, which we denote by $\overline S$ and $F$. Theorems \ref{Theorem2} and \ref{Theorem3} below refer to this example. These theorems are proven in this paper.  

 The resolution of singularities of a three dimensional normal local ring which we construct is similar to    the one constructed in  \cite[Example 6]{CS}, referred to above, which is used to give an example of a  valuative filtration  with irrational multiplicity.  In \cite[Example 6]{CS}, no details of the construction or analysis of the example are given.  We give complete details in this paper. We illustrate the application of anti-positive intersection products in the proof.

  \begin{Theorem}\label{Theorem2} Let $D=n\overline S+jF$ with $n,j\in \NN$.
   Then 
    $$
   \lim_{m\rightarrow\infty}
  \frac{\ell_R(R/I(mD))}{m^3} 
  =\left\{\begin{array}{ll}
  33 n^3&\mbox{ if }j<n\\
  78n^3-81n^2j+27nj^2+9j^3&\mbox{ if }n\le j<n\left(3-\frac{\sqrt{3}}{3}\right)\\
  \left(\frac{2007}{169}-\frac{9\sqrt{3}}{338}\right)j^3&\mbox{ if }n\left(3-\frac{\sqrt{3}}{3}\right)<j.
  \end{array}\right.
  $$
    \end{Theorem}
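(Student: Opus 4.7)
The key reduction is equation (\ref{eq11}), which gives
$$\lim_{m\to\infty}\frac{\ell_R(R/I(mD))}{m^3}=-\frac{\langle(-D)^3\rangle}{6},$$
so the entire task is the evaluation of the anti-positive intersection number $\langle(-n\overline S-jF)^3\rangle$ on $X$. My plan is to produce, in each of the three claimed regimes, an explicit numerical Zariski-type decomposition $\pi^*(-D)=P(n,j)+N(n,j)$ on a suitable blowup $\pi\colon Y\to X$, with $P$ nef against every curve contracted to $\mbox{Spec}(R)$ and $N$ effective and $\pi$-exceptional; by the Fujita-type characterization underlying the construction of $\langle\cdot\rangle$ in [C5] this gives $\langle(-D)^3\rangle=P(n,j)^3$, and then one expands the cube using the intersection theory of $Y$.

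The preparatory step is to read off, from the explicit construction of $X$ given earlier in the paper, the four triple intersection numbers $\overline S^{\,3},\;\overline S^{\,2}\cdot F,\;\overline S\cdot F^{2},\;F^{3}$, and to identify the distinguished exceptional curves on $\overline S$ and on $F$ whose intersections with $-\overline S$ and $-F$ control nefness. These test curves partition the cone of pairs $(n,j)$ into chambers indexed by the sign of $(-D)\cdot C$, and those walls are precisely what produce the three regimes $j<n$, $n\le j<n(3-\sqrt 3/3)$, $j>n(3-\sqrt 3/3)$ appearing in the theorem.

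Chamber by chamber the computation is then essentially mechanical. For $j<n$ I expect $-D$ to already be nef against the test curves after a linear adjustment that absorbs the $F$-contribution into $\overline S$, so that $P$ and hence $P^{3}$ depend only on $n$; the only check is that $P^{3}=-198n^{3}$, which gives $33n^{3}$. For $n\le j<n(3-\sqrt3/3)$ two curve families simultaneously constrain $P$; the coefficients of $P$ are determined by a linear system in $(n,j)$, and expanding $P^{3}$ yields the cubic $78n^{3}-81n^{2}j+27nj^{2}+9j^{3}$. For $j>n(3-\sqrt3/3)$ a further blowup is forced: a distinguished curve on the resolved $\overline S$ carries a coefficient in $N$ that must satisfy a quadratic nefness condition, whose positive root is exactly $(9-\sqrt3)/3$; substituting back into $P^{3}$ produces the irrational constant $\tfrac{2007}{169}-\tfrac{9\sqrt 3}{338}$.

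The principal obstacle is the third regime: isolating the curve on $\overline S$ whose nefness constraint is quadratic rather than linear, which is where the specific geometry of the construction (the same kind of configuration used in \cite{CS} to produce irrational valuative multiplicities) does the essential work. Once that quadratic is solved, the remaining task is bookkeeping — numerical expansion of $P(n,j)^{3}$ in terms of the four intersection numbers above, together with the check that the three expressions agree at the walls $j=n$ and $j=n(3-\sqrt3/3)$. The first agreement is immediate by substitution (both pieces equal $33n^{3}$ at $j=n$), and the second follows because continuity of $P(n,j)$ across the irrational wall forces the cubic and the irrational monomial to match there; together these confirm the piecewise formula.
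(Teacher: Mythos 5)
Your overall strategy is the paper's: reduce to the anti-positive intersection number $\langle(-D)^3\rangle$ via (\ref{eq11}), produce a Zariski-type decomposition $-D=P+N$ with $P$ nef and $N$ effective, and invoke the characterization from \cite{C5} (Proposition \ref{Prop1}) to replace $\langle(-D)^3\rangle$ by the ordinary cube $(P^3)$, which is then expanded using the four intersection numbers $(\overline S^3)=468$, $(\overline S^2\cdot F)=-162$, $(\overline S\cdot F^2)=54$, $(F^3)=54$.

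However, there is a genuine conceptual error in your treatment of the third regime. You claim that for $j>n\left(3-\tfrac{\sqrt 3}{3}\right)$ ``a further blowup is forced.'' That is false. Theorem \ref{Theorem1} of the paper shows that in all three cases the divisor $-\gamma_{\overline S}(D)\overline S-\gamma_F(D)F$ is nef already on $U$ (equivalently on the given resolution $Y$), so the positive part $P$ is realized on the original model with no further birational modification; in case 3 it is $P=-\tfrac{3j}{9-\sqrt3}\overline S-jF$. Related to this, your description of the mechanism behind the irrationality is not accurate: you speak of ``a distinguished curve on the resolved $\overline S$'' whose intersection ``controls nefness'' via a ``quadratic nefness condition,'' but $\overline S$ is (the strict transform of the section of the $\PP^1$-bundle, identified with) the abelian surface $S=W\times W$, which has no negative curves at all. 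On $\overline S$ the relevant constraint is that $\mathcal O_Y(-n\overline S-jF)\otimes\mathcal O_{\overline S}$ lie in $\overline{\rm Eff}(\overline S)=\overline{\rm Nef}(\overline S)$, and this cone is the round (quadratic) light-cone $\{L : (L^2)\ge 0\}$ in the 3-dimensional subspace spanned by $A,B,\Delta$; the wall $j=n(3-\sqrt3/3)$ is the intersection of the ray $(2n-j, 3n-j, 4n-j)$ with the boundary of that quadratic cone (equation $3j^2-18nj+26n^2=0$). There is no single test curve. By contrast, on $F$ the nef cone is polyhedral with $C_0$ the unique negative curve, so your test-curve picture is correct there, and it is the restriction to $\overline S$, not to $F$, that generates the irrational wall. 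Finally, the quantity $(9-\sqrt3)/3$ that you call ``the positive root'' is the wall slope $j/n$, not the coefficient appearing in $P$, which is $\gamma_{\overline S}(D)/j=\tfrac{3}{9-\sqrt3}=\tfrac{9+\sqrt3}{26}$; conflating these would derail the actual cube computation.

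In short: the plan to cube a nef part is right and your numerical targets (and the continuity checks at the two walls) are correct, but you must recognize that the nef part lives on $X$ itself in all three chambers, and that the quadratic constraint comes from the non-polyhedral pseudoeffective cone of the abelian surface $\overline S$, not from a distinguished negative curve on a further blowup.
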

  
  In particular, 
  $$
  e_R(\mathcal I(D);R)
  =\left\{\begin{array}{ll}
  198 n^3&\mbox{ if }j<n\\
  468n^3-486n^2j+162nj^2+54j^3&\mbox{ if }n\le j<n\left(3-\frac{\sqrt{3}}{3}\right)\\
  \left(\frac{12042}{169}-\frac{27\sqrt{3}}{169}\right)j^3&\mbox{ if }n\left(3-\frac{\sqrt{3}}{3}\right)<j.
  \end{array}\right.
  $$

  As a consequence, we have that 
  \begin{equation}\label{eq20}
  \lim_{m\rightarrow \infty}\frac{\ell_R(R/I(mF))}{m^3}=\left(\frac{2007}{169}-\frac{9\sqrt{3}}{338}\right)
  \end{equation}
  and 
  \begin{equation}\label{eq30}
  e_R(\mathcal I(F);R)=\frac{12042}{169}-\frac{27\sqrt{3}}{169},
  \end{equation}
 giving an example of a divisorial valuation $\nu=\nu_F$ dominating $R$ such that 
  $e_R(\mathcal I(\nu);R)=e_R(\mathcal I(F);R)$ is an irrational number, where 
  $\mathcal I(\nu)=\{I(\nu)_m\}$ and $I(\nu)_n$ is the valuation ideal  $I(\nu)_m=\{f\in R\mid \nu(f)\ge m\}$.

  \begin{Theorem}\label{Theorem3} For $n,j\in \NN$,
  $$
  \begin{array}{l}
 \lim_{m\rightarrow\infty}\frac{\ell_R(R/I(mn\overline S)I(mjF))}{m^3}\\
 = 33n^3+\left(\frac{891}{26}+\frac{99\sqrt{3}}{26}\right)n^2j+\left(\frac{6021}{169}-\frac{27\sqrt{3}}{338}\right)nj^2
 +\left(\frac{2007}{169}-\frac{9\sqrt{3}}{338}\right)j^3.
 \end{array}
 $$
 \end{Theorem}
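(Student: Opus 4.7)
The plan is to derive Theorem \ref{Theorem3} from identity (\ref{eq10}) applied with $r=2$, $D_1=\overline{S}$, $D_2=F$, $d=3$, $n_1=n$, $n_2=j$, which expresses the desired limit as
\begin{equation*}
-\sum_{d_1+d_2=3}\frac{\langle(-\overline{S})^{d_1}\cdot(-F)^{d_2}\rangle}{d_1!\,d_2!}\,n^{d_1}j^{d_2}.
\end{equation*}
Thus the content of Theorem \ref{Theorem3} is the evaluation of the four anti-positive intersection numbers $\langle(-\overline{S})^{d_1}\cdot(-F)^{d_2}\rangle$ with $d_1+d_2=3$ on the resolution $\phi:X\to\mbox{Spec}(R)$. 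Two of them are pure and can be read off Theorem \ref{Theorem2}: specializing $j=0$ yields $\langle(-\overline{S})^3\rangle=-198$, while specializing $n=0$ (equivalently, invoking (\ref{eq20})) yields $\langle(-F)^3\rangle=-\frac{12042}{169}+\frac{27\sqrt{3}}{169}$. Inserted into the formula above, these reproduce the $n^3$ and $j^3$ coefficients announced in Theorem \ref{Theorem3}.

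Next I would compute the two mixed products $\langle(-\overline{S})^2\cdot(-F)\rangle$ and $\langle(-\overline{S})\cdot(-F)^2\rangle$. For this I plan to re-use the Zariski-type positive-part decompositions of $-\overline{S}$ and $-F$ on $X$ that will have been constructed in the course of proving Theorem \ref{Theorem2}; by the theory in \cite{C5} each anti-positive intersection number is the ordinary triple intersection of the positive parts of the individual divisors. I expect the positive part of $-\overline{S}$ to be $-\overline{S}$ itself (this is consistent with the first chamber of Theorem \ref{Theorem2} being purely polynomial in $n$), while the positive part of $-F$ should have the form $P_F=-F-\lambda\overline{S}$ with $\lambda$ determined as the unique positive real making $P_F$ nef against the exceptional curves of $X$. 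The chamber boundary $j=n(3-\sqrt{3}/3)$ of Theorem \ref{Theorem2} strongly suggests $\lambda=3-\frac{\sqrt{3}}{3}$, and this irrationality will be the source of the $\sqrt{3}$ in the mixed coefficients. With both positive parts identified, the mixed numbers reduce to ordinary triple intersections on $X$, computable from the intersection data of $\overline{S}$, $F$ and the curves of $\overline{S}\cap F$ produced in the course of proving Theorem \ref{Theorem2}.

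The hard part will be the geometric verification that the candidate divisor $-F-\lambda\overline{S}$ is indeed globally nef on $X$ for the expected value of $\lambda$, not merely against the obvious exceptional curves; the nefness conditions come from a quadratic equation whose discriminant produces the $\sqrt{3}$. Once this is settled, the irrational coefficients $99\sqrt{3}/26$ and $-27\sqrt{3}/338$ should appear as linear consequences of a single quadratic irrationality propagating through the intersection computations, and substituting the four anti-positive intersection numbers into the polynomial expansion in the first paragraph will yield the stated closed form.
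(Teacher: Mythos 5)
Your overall strategy — reduce the limit to the anti-positive intersection products $\langle(-\overline S)^{d_1}\cdot(-F)^{d_2}\rangle$ via (\ref{eq10}), identify each one as the ordinary triple product of the nef representatives (Proposition \ref{Prop1}), and read off the pure cubes from Theorem \ref{Theorem2} — is exactly the paper's approach. However, both of your guesses for the nef representatives are wrong, and as written they would produce the wrong numbers.

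First, the nef representative of $-\overline S$ is $-\overline S - F$, not $-\overline S$. Indeed $-\overline S$ is not nef: $\overline S$ lies in the first chamber ($j<n$) of Theorem \ref{Theorem1}, where $\gamma_F(D)=n$, so $\gamma_F(\overline S)=1$ and the $F$-coefficient of the nef part must be $1$. This is already forced by what you read off from Theorem \ref{Theorem2}: you correctly identified $\langle(-\overline S)^3\rangle=-198$, but with your proposed nef part you would compute $((-\overline S)^3)=-( \overline S^3)=-468\ne -198$, whereas $((-\overline S-F)^3)=-(468-486+162+54)=-198$. So your proposal is internally inconsistent on this point.

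Second, the coefficient $\lambda$ in the nef representative $-\lambda\overline S-F$ of $-F$ is $\gamma_{\overline S}(F)=\tfrac{3}{9-\sqrt{3}}=\tfrac{9+\sqrt{3}}{26}\approx 0.41$, not $3-\tfrac{\sqrt{3}}{3}\approx 2.42$. You confused the slope of the chamber wall with its reciprocal: the wall is $j=n\left(3-\tfrac{\sqrt{3}}{3}\right)$, equivalently $n=\tfrac{3}{9-\sqrt{3}}\,j$, and the divisor $F$ (with $n=0$, $j=1$) lies in the third chamber, where $\gamma_{\overline S}(D)=\tfrac{3}{9-\sqrt{3}}j$. (Note also that $-\left(3-\tfrac{\sqrt{3}}{3}\right)\overline S-F$ is not even nef, since that divisor lies in the first chamber where the $F$-coefficient would have to be bumped up again.) Once these two corrections are made, the four triple products $((-\overline S-F)^{i_1}\cdot(-\tfrac{3}{9-\sqrt{3}}\overline S-F)^{i_2})$ computed from $(\overline S^3)=468$, $(\overline S^2\cdot F)=-162$, $(\overline S\cdot F^2)=54$, $(F^3)=54$ give exactly the coefficients stated in the theorem, which is (\ref{eq13}).
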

 
In particular, the mixed multiplicities are
$$
\begin{array}{l}
e_R(\mathcal I(\overline S)^{[3]};R)=e_R(\mathcal I(\overline S);R)=198\\
e_R(\mathcal I(\overline S)^{[2]},\mathcal I(F)^{[1]};R)=\frac{891}{13}+\frac{99\sqrt{3}}{13}\\
e_R(\mathcal I(\overline S)^{[1]},\mathcal I(F)^{[2]};R)=\frac{12042}{169}-\frac{27\sqrt{3}}{169}\\
e_R(\mathcal I(F)^{[3]};R)=e_R(\mathcal I(F);R)=\frac{12042}{169}-\frac{27\sqrt{3}}{169}.\end{array}
$$

\section{Anti-positive intersection products}\label{SecPIP}

\subsection{The construction of anti-positive intersection products} In this subsection we review the construction of anti-positive intersection products in \cite{C5}. 

 Anti-positive intersection products generalize the positive intersection products of Cartier divisors defined on projective varieties in \cite{BFJ} over an algebraically closed field of characteristic zero and in \cite{C4} over an arbitrary field.

Let $K$ be an algebraic function field over a field $k$. An algebraic local ring of $K$ is a local ring $R$ that is a localization of a finitely generated $k$-algebra and is a domain whose quotient field is $K$ with maximal ideal $m_R$. Let $R$ be a $d$-dimensional algebraic normal local ring of $K$. Let ${\rm BirMod}(R)$ be the directed set of blowups 
$\phi:X\rightarrow \mbox{Spec}(R)$  of an $m_R$-primary ideal $I$ of $R$ such that $X$ is normal.

Suppose that $\phi:X\rightarrow \mbox{Spec}(R)$ is in ${\rm BirMod}(R)$.
Let $\{E_1,\ldots,E_t\}$ be the irreducible exceptional divisors of $\phi$.  We define $M^1(X)$ to be the subspace of the real  vector space $E_1\RR+\cdots + E_t\RR$ that is generated by the Cartier divisors. An element of $M^1(X)$ will be called an $\RR$-divisor on $X$. We will say that $D\in M^1(X)$ is a $\QQ$-Cartier divisor if there exists $n\in \ZZ_+$ such that $nD$ is a Cartier divisor. 

In Section 6.1 of \cite{C5}, we define a natural intersection product $(D_1\cdot D_2\cdot\ldots\cdot D_d)$ on $X$ for $D_1,\ldots,D_d\in M^1(X)$.  The intersection product is a restriction of the one defined in \cite{Kl}. 

We will say that a divisor $F=a_1E_1+\cdots+a_tE_t\in M^1(X)$ is effective if $a_i\ge 0$ for all $i$, and anti-effective if $a_i\le 0$ for all $i$. This defines a partial order $\le$ on $M^1(X)$ by $A\le B$ if $B-A$ is effective.  The effective cone ${\rm EF}(X)$ is the closed convex cone in $M^1(X)$  of effective $\RR$-divisors.
The anti-effective cone ${\rm AEF}(X)$ is the closed convex cone in $M^1(X)$ consisting of all anti-effective $\RR$-divisors. 

We will say that an anti-effective divisor $F\in M^1(X)$ is  numerically effective (nef) if 
$$
(F\cdot C) \ge 0
$$
 for all closed curves $C$ in $\phi^{-1}(m_R)$.  The nef cone $\mbox{Nef}(X)$ is the closed convex cone in $M^1(X)$ of all nef $\RR$-divisors on $X$. 
There is an  inclusion of cones ${\rm Nef}(X)\subset {\rm AEF}(X)$.
 We define a divisor $F\in M^1(X)$ to be  ample if $F$ is a formal sum $F=\sum a_iF_i$ where $F_i$ are ample anti-effective Cartier divisors and   $a_i$ are positive real numbers. A divisor $D$ is anti-ample if $-D$ is ample. We define the convex cone
$$
\mbox{Amp}(X)=\{F\in M^1(X)\mid F\mbox{ is ample}\}.
$$

We have that $\mbox{Amp}(X)\subset\mbox{Nef}(X)$, the closure of $\mbox{Amp}(X)$ is $\mbox{Nef}(X)$,
and the interior of $\mbox{Nef}(X)$ is $\mbox{Amp}(X)$, as in \cite{Kl}, \cite[Theorem 1.4.23]{LV1}.

Suppose that $X\in{\rm BirMod}(R)$. Let $E_1,\ldots,E_r$ be the exceptional components of $X$ for the morphism $X\rightarrow \mbox{Spec}(R)$.
 For $0< p\le d$, we define
 $M^p(X)$ to be the direct product of $M^1(X)$ $p$ times, and we define $M^0(X)=\RR$.
 For $1< p\le d$,  we define
 $L^p(X)$ to be the vector space of $p$-multilinear forms from $M^p(X)$ to $\RR$, and define $L^0(X)=\RR$.

The intersection product gives us 
 $p$-multilinear maps
\begin{equation}\label{eq4*}
M^p(X)\rightarrow L^{d-p}(X)
\end{equation}
 for $0\le p\le d$.
 
We have that ${\rm BirMod}(R)$ is a directed set by the $R$-morphisms $Y\rightarrow X$ for $X,Y\in {\rm BirMod}(R)$.     There is at most one $R$-morphism $X\rightarrow Y$ for $X,Y\in {\rm BirMod}(X)$.

The set 
$\{M^p(Y_i)\mid Y_i\in {\rm BirMod}(R)\}$ is a directed system of real vector spaces, where we have a linear mapping 
$f_{ij}^*:M^p(Y_i)\rightarrow M^p(Y_j)$ if the natural birational map $f_{ij}:Y_j\rightarrow Y_i$ is an $R$-morphism. 
We define 
$$
M^p(R)=\lim_{\rightarrow}M^p(Y_i)
$$

Anti-positive intersection products $\langle\alpha_1\cdot \ldots \cdot \alpha_p\rangle$ for anti-effective $\alpha_1,\ldots,\alpha_p\in M^1(R)$ are defined  in \cite[Definition 7.4]{C5}, generalizing the positive intersection products defined 
on projective varieties  in \cite{BFJ} over an algebraically closed field of characteristic zero, and
in \cite[Definition 4.4]{C4} over an arbitrary field. The anti-positive intersection product $\langle \alpha_1\cdot\ldots\cdot \alpha_d\rangle$ of $d$ anti-effective  divisors $\alpha_1,\ldots,\alpha_d\in M^1(R)$ is always a non positive real number.

   The proof of the following proposition is similar to that of \cite[Proposition 4.12]{C4}, replacing the reference to \cite[Proposition 4.3]{C4} with \cite[Proposition 7.3]{C5}, and replacing the use of the continuity statement of \cite[Proposition 4.7]{C4}  with the continuity statement of \cite[Proposition 7.5]{C5}.
   
     \begin{Proposition}\label{Prop2}
     Suppose that $\alpha_1,\ldots,\alpha_p\in M^1(R)$ are anti-effective. Then the anti-positive intersection product $\langle\alpha_1\cdot\ldots\cdot\alpha_p\rangle$ is the least upper bound  of all ordinary intersection products 
     $\beta_1\cdot\ldots\cdot\beta_p$ in $L^{d-p}(R)$ with $\beta_i\in M^1(R)$ anti-effective and nef and $\beta_i\le \alpha_i$.
\end{Proposition}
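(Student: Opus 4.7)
The plan is to mirror the proof of \cite[Proposition 4.12]{C4} with the two substitutions flagged in the excerpt. I would first recall that by \cite[Definition 7.4]{C5} the anti-positive intersection product $\langle\alpha_1\cdot\ldots\cdot\alpha_p\rangle$ is itself defined as a supremum (equivalently, limit) of ordinary intersection products of nef anti-effective approximations to the $\alpha_i$ coming from models $Y\in {\rm BirMod}(R)$. The content of the proposition is that this supremum, a priori taken over a rather restricted family of approximations, is in fact equal to the supremum over the larger family of \emph{all} $(\beta_1,\ldots,\beta_p)$ with $\beta_i\in M^1(R)$ nef, anti-effective and $\beta_i\le \alpha_i$.

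The upper-bound direction is the first step: I would fix an arbitrary tuple $(\beta_1,\ldots,\beta_p)$ of the kind described and prove $\beta_1\cdot\ldots\cdot\beta_p\le \langle\alpha_1\cdot\ldots\cdot\alpha_p\rangle$ in $L^{d-p}(R)$. The key tool is \cite[Proposition 7.3]{C5}, which supplies the monotonicity of ordinary intersection products of nef and anti-effective divisors that plays the role of \cite[Proposition 4.3]{C4} in the projective setting. Writing $\alpha_i=\beta_i+(\alpha_i-\beta_i)$ with $\alpha_i-\beta_i$ effective and comparing the intersection numbers of the $\beta_i$ with those of the approximations to the $\alpha_i$ on a sufficiently high common model yields the required inequality term by term.

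For the reverse inequality, I would extract directly from \cite[Definition 7.4]{C5} a sequence of nef anti-effective approximations $\gamma_i^{(n)}$ on models $Y_n\to X$ whose ordinary intersection products converge to $\langle\alpha_1\cdot\ldots\cdot\alpha_p\rangle$. Each $\gamma_i^{(n)}$ is then interpreted as an element of $M^1(R)$ via the direct limit $M^1(R)=\lim_\to M^1(Y_i)$, and the construction ensures $\gamma_i^{(n)}\le \alpha_i$ after pullback to $Y_n$. The continuity statement of \cite[Proposition 7.5]{C5} (in place of \cite[Proposition 4.7]{C4}) then guarantees that $\gamma_1^{(n)}\cdot\ldots\cdot\gamma_p^{(n)}$ converges to $\langle\alpha_1\cdot\ldots\cdot\alpha_p\rangle$ in $L^{d-p}(R)$, showing that the supremum over the enlarged family is attained.

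The main obstacle will be the bookkeeping needed to ensure the approximations produced by the definition of $\langle\,\cdot\,\rangle$ can genuinely be realized as elements $\gamma_i^{(n)}\in M^1(R)$ satisfying $\gamma_i^{(n)}\le\alpha_i$ in the partial order on $M^1(R)$, and to transport the inequalities and intersection products faithfully through the direct-limit structure. Once these identifications are carefully set up, the monotonicity of \cite[Proposition 7.3]{C5} and the continuity of \cite[Proposition 7.5]{C5} combine to give both inequalities and hence the claimed least-upper-bound characterization, exactly as in the projective analogue.
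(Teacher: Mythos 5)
Your high-level plan coincides with the paper's: the author simply asserts that the proof mirrors \cite[Proposition 4.12]{C4} after swapping the cited references, and you say the same thing. But the detailed breakdown you give is off in two ways, and neither of those two inaccuracies is a matter of presentation.

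First, the direction you call the ``reverse inequality'' is in fact trivial and does not require any continuity argument. The family appearing in \cite[Definition 7.4]{C5} (classes of the form $(\alpha_1-D_1)\cdot\ldots\cdot(\alpha_p-D_p)$ with $D_i$ effective $\QQ$-Cartier and $\alpha_i-D_i$ nef) is a \emph{subset} of the family appearing in the Proposition (all $\beta_1\cdot\ldots\cdot\beta_p$ with $\beta_i$ nef, anti-effective, $\beta_i\le\alpha_i$). Consequently, any upper bound of the larger family is an upper bound of the smaller one, hence $\ge\langle\alpha_1\cdot\ldots\cdot\alpha_p\rangle$, which is the least upper bound of the smaller family by definition. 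No extraction of a convergent sequence, and no appeal to \cite[Proposition 7.5]{C5}, is needed there.

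Second, the direction you call the ``upper bound direction'' is the substantive one, and your sketch underspecifies it. The obstruction is that an arbitrary $\beta_i\in M^1(R)$ nef, anti-effective, with $\beta_i\le\alpha_i$ need not have $\alpha_i-\beta_i$ of the shape allowed by the Definition --- namely an effective \emph{$\QQ$-Cartier} divisor --- so $\beta_1\cdot\ldots\cdot\beta_p$ is not a priori one of the terms whose supremum defines $\langle\alpha_1\cdot\ldots\cdot\alpha_p\rangle$. ``Comparing term by term'' via monotonicity does not resolve this: one must perturb $\beta_i$ to nearby nef anti-effective classes $\gamma_i\le\beta_i$ for which $\alpha_i-\gamma_i$ is effective $\QQ$-Cartier (using an anti-ample divisor to push into the interior of the nef cone so that a rational approximation stays nef), obtain $\gamma_1\cdot\ldots\cdot\gamma_p\le\langle\alpha_1\cdot\ldots\cdot\alpha_p\rangle$ directly from the Definition, and then pass to the limit. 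It is in this step that a continuity statement is essential, together with the directedness and compactness machinery supplied by \cite[Proposition 7.3]{C5}; note also that \cite[Proposition 7.3]{C5} is the analogue of \cite[Proposition 4.3]{C4}, i.e. the statement that the defining set $S$ is nonempty, directed, and has a unique least upper bound --- not a monotonicity statement as you describe it. If you reorganize the argument so that the approximation and continuity are placed in the hard direction and drop the superfluous argument in the trivial direction, the proof will go through exactly as the paper intends.
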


\subsection{$\gamma_{E}(D)$ and anti-positive intersection products}

Let $\phi:X\rightarrow \mbox{spec}(R)\in\mbox{BirMod}(R)$. 
   Let $E_1,\ldots,E_t$ be the irreducible exceptional divisors of $\phi$.

 Suppose that $D=\sum a_iE_i$ is an effective $\QQ$-Cartier divisor on $X$. If $D$ is Cartier, then $\Gamma(X,\mathcal O_X(-D))$ is an $m_R$-primary ideal since $R$ is normal. Write $I(D)=\Gamma(X,\mathcal O_X(-D))$.
   
   Let $\nu_{E_i}$  be the natural discrete valuation with valuation ring $\mathcal O_{X,E_i}$.

   Let $r$ be a fixed positive integer such that $rD$ is a Cartier divisor. 
      Define
   $$
   \tau_{rm,E_i}(rD)=\min\{\nu_{E_i}(f)\mid f\in \Gamma(X,\mathcal O_X(-mrD))\},
   $$
   and $\gamma_{E_i}(D)=\inf _m\frac{\tau_{rm,E_i}(rD)}{rm}$. The real number $\gamma_{E_i}(D)$ is independent of $r$.
      We have that    
      $$
      \gamma_{E_i}(D)\ge a_i
      $$
       for all $i$, and 
   $$
  I(mrD)= \Gamma(X,\mathcal O_X(-mrD))=\Gamma(X,\mathcal O_X(-\lceil{\sum mr\gamma_{E_i}(D)E_i}\rceil)
   $$
   for all $m\in \NN$ (this is shown in \cite[Lemma 3.1]{C5}). Here $\lceil x\rceil$ denotes the round up of a real number $x$.

   \begin{Lemma}\label{Lemma3} Suppose that $D$ is anti-nef. Then $\gamma_{E_i}(D)=a_i$ for all $i$.
   \end{Lemma}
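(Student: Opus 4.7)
The plan is to establish the nontrivial inequality $\gamma_{E_i}(D) \le a_i$ (the reverse inequality is already noted just before the lemma) by producing, for each rational $\epsilon > 0$, a global section of $\mathcal O_X(-NqrD)$ whose $\nu_{E_i}$-value is at most $Nqr(a_i+\epsilon)$ for a suitable integer $N$. Letting $\epsilon\to 0^+$ then yields $\gamma_{E_i}(D)\le a_i$, so combined with $\gamma_{E_i}(D)\ge a_i$ the equality follows.

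The perturbation exploits a relatively ample divisor coming from the blowup structure of $X$. Since $X\in{\rm BirMod}(R)$ is the normalization of the blowup of some $m_R$-primary ideal $J$, we have $J\mathcal O_X = \mathcal O_X(-F)$ for an effective exceptional Cartier divisor $F=\sum f_j E_j$, and $-F$ is ample in the sense of Section~\ref{SecPIP}. For a rational $\epsilon=p/q>0$ the anti-effective Cartier divisor $-qrD-pF$ is the sum of the nef divisor $-qrD$ (supplied by the hypothesis that $-D$ is nef) and the ample divisor $-pF$, hence lies in $\mbox{Amp}(X)$ (since $\mbox{Amp}(X)$ is the interior of $\mbox{Nef}(X)$). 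Consequently, for $N$ sufficiently large and divisible, the sheaf $\mathcal O_X(-N(qrD+pF))$ is generated by its global sections on all of $X$. At the generic point $\xi_i$ of $E_i$ the stalk is a rank-one free $\mathcal O_{X,\xi_i}$-module whose generator has $\nu_{E_i}$-value equal to $N(qra_i+pf_i)$, so global generation produces a section $g\in\Gamma(X,\mathcal O_X(-N(qrD+pF)))$ with $\nu_{E_i}(g)=N(qra_i+pf_i)$.

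Because $qrD+pF\ge qrD$ as effective divisors, the inclusion $\mathcal O_X(-N(qrD+pF))\subset \mathcal O_X(-NqrD)$ places $g$ in $\Gamma(X,\mathcal O_X(-NqrD))$, giving
$$
\gamma_{E_i}(D)\;\le\;\frac{\tau_{Nqr,E_i}(rD)}{Nqr}\;\le\;\frac{\nu_{E_i}(g)}{Nqr}\;=\;a_i+\frac{p\,f_i}{qr}.
$$
Letting $p/q\to 0^+$ forces $\gamma_{E_i}(D)\le a_i$, completing the proof. The main delicate step is the global generation claim for large multiples of $-(qrD+pF)$: one must confirm that ampleness in the sense of Section~\ref{SecPIP} implies the usual relative base-point-freeness of high multiples on $X$, which ultimately reduces to Kleiman's criterion together with the standard relative Serre theorem for the projective morphism $\phi\colon X\to\mbox{Spec}(R)$.
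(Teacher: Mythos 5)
Your proof is correct and follows essentially the same approach as the paper: you perturb the nef divisor $-D$ by a small multiple of an anti-ample divisor so that the perturbation is anti-ample, invoke global generation of large multiples of the (Cartier) anti-ample perturbation to produce a section with the prescribed vanishing order along $E_i$, and let the perturbation tend to zero. The paper phrases the perturbation slightly differently (choosing an anti-ample effective $\QQ$-Cartier $A=\sum c_iE_i$ with $a_i\le c_i<a_i+\epsilon$ rather than adding $\tfrac{p}{qr}F$ for a fixed anti-ample $F$), but this is an immaterial variation of the same idea.
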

   
   \begin{proof}
   If $-D$ is ample, then $\mathcal O_X(-mrD)$ is generated by global sections if $r$ is such that $rD$ is Cartier and $m\gg 0$. Thus for all $i$,  there exists $f\in\Gamma(X,\mathcal O_X(-rmD))$ such that $\nu_{E_i}(f)=mra_i$. Thus $\gamma_{E_i}(D)=a_i$.
   
   Now suppose that  $-D$ is nef. Given $\epsilon>0$, there exists an anti-ample effective $\QQ$-Cartier divisor $A$ on $X$ such that $A=\sum c_iE_i$ with $a_i\le c_i<a_i+\epsilon$ for all $i$. Let $r$ be such that $rA$ and $rD$ are Cartier divisors. For $m\gg 0$, there exists $f\in \Gamma(X,\mathcal O_X(-mrA))$ such that $\nu_{E_i}(f)=mrc_i<mra_i+mr\epsilon$. Thus $\gamma_{E_i}(D)\le a_i+\epsilon$. Since this is true for all $\epsilon$, we have that $\gamma_{E_i}(D)=a_i$.
   \end{proof}

  \begin{Lemma}\label{Lemma1} Suppose that $D\in M^1(X)$ is an effective $\QQ$-Cartier divisor such that $-\sum\gamma_{E_i}(D)E_i$ is nef. Suppose that $Y\rightarrow \mbox{Spec}(R)\in\mbox{BirMod}(R)$ and there exists a factorization $\psi:Y\rightarrow X$.
  If $G\in M^1(Y)$ is an effective and
  anti-nef $\QQ$-Cartier divisor such that  $-G\le \psi^*(-D)$ then $-G\le   \psi^*(-\sum \gamma_{E_i}(D)E_i)$.
  \end{Lemma}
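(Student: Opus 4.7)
The plan is to reformulate the conclusion and then prove it by comparing asymptotic invariants of global sections on $Y$. Set $\Gamma := \sum_i \gamma_{E_i}(D) E_i$. Then the desired inequality $-G \le \psi^*(-\Gamma)$ is just $\psi^*\Gamma \le G$, and this in turn amounts to showing that for every prime exceptional divisor $E$ of $Y\to\mbox{Spec}(R)$, the coefficient of $E$ in $\psi^*\Gamma$ is at most the coefficient of $E$ in $G$. Observe that $\psi^*\Gamma$ is itself a valid candidate in the role of $G$ — it is effective, it dominates $\psi^*D$ (since $\Gamma \ge D$), and $-\psi^*\Gamma$ is nef on $Y$ as the proper pullback of the nef divisor $-\Gamma$ — so the lemma is really a minimality statement for $\psi^*\Gamma$.

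The core of the argument is an identity of global sections on $Y$. Fix $r$ so that $rD$ and $r\Gamma$ are Cartier. Applying the projection formula to $\psi$ (valid since $\psi$ is projective birational between normal schemes, so $\psi_*\mathcal O_Y = \mathcal O_X$) together with the identity $\Gamma(X,\mathcal O_X(-mrD)) = \Gamma(X,\mathcal O_X(-mr\Gamma))$ (the displayed equation just before Lemma \ref{Lemma3}), I would deduce
$$
\Gamma(Y,\mathcal O_Y(-mr\psi^*D)) = \Gamma(Y,\mathcal O_Y(-mr\psi^*\Gamma))
$$
for every $m \in \NN$. The hypothesis $-G \le \psi^*(-D)$ gives $\psi^*D \le G$, hence $\Gamma(Y,\mathcal O_Y(-mrG)) \subset \Gamma(Y,\mathcal O_Y(-mr\psi^*D))$, and combining these produces
$$
\Gamma(Y,\mathcal O_Y(-mrG)) \subset \Gamma(Y,\mathcal O_Y(-mr\psi^*\Gamma)).
$$

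Now fix any prime exceptional divisor $E$ on $Y$. Since the minimum of $\nu_E$ over a smaller set is at least its minimum over a larger one, the inclusion above yields $\tau_{mr,E}(r\psi^*\Gamma) \le \tau_{mr,E}(rG)$ for all $m$; taking $\inf_m$ and dividing by $mr$ gives $\gamma_E(\psi^*\Gamma) \le \gamma_E(G)$. Because $-G$ is nef by hypothesis and $-\psi^*\Gamma$ is nef on $Y$, Lemma \ref{Lemma3} applied on $Y$ identifies $\gamma_E(\psi^*\Gamma)$ with the coefficient of $E$ in $\psi^*\Gamma$ and $\gamma_E(G)$ with the coefficient of $E$ in $G$. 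Hence $\nu_E(\psi^*\Gamma) \le \nu_E(G)$ for every prime exceptional $E$ on $Y$, which is precisely $\psi^*\Gamma \le G$.

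The step I expect to require the most care is the global-sections identity in the second paragraph: everything afterwards is a clean invocation of the definition of $\gamma_E$ and of Lemma \ref{Lemma3} on $Y$, but combining the projection formula with the $X$-side identity $\Gamma(X,\mathcal O_X(-mrD)) = \Gamma(X,\mathcal O_X(-mr\Gamma))$ is the step that genuinely uses the hypothesis that $-\Gamma$ is nef and transports it across $\psi$.
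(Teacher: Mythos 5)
Your proposal follows essentially the same path as the paper's proof, and the conclusion is right; the only substantive difference is one of bookkeeping. The paper works with $\gamma_{F_i}(\psi^*D)$ (which is well-defined since $\psi^*D$ is $\QQ$-Cartier) and asserts as its key step that $\sum_i\gamma_{F_i}(\psi^*D)F_i=\psi^*\bigl(\sum_i\gamma_{E_i}(D)E_i\bigr)$ using the nef hypothesis; you instead work with $\gamma_E(\psi^*\Gamma)$ where $\Gamma=\sum\gamma_{E_i}(D)E_i$, and your step establishing $\Gamma(Y,\mathcal O_Y(-mr\psi^*D))=\Gamma(Y,\mathcal O_Y(-\lceil mr\psi^*\Gamma\rceil))$ is exactly equivalent to the paper's key identity (both encode, via the display before Lemma~\ref{Lemma3}, that the $\gamma$-divisor of $\mathcal I(D)$ computed on $Y$ is $\psi^*\Gamma$). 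Two caveats you should make explicit. First, the deduction of that global-sections identity is not a formal consequence of the projection formula plus the $X$-side identity as the write-up suggests: the chain $\Gamma(Y,\mathcal O_Y(-mr\psi^*D))=\Gamma(X,\mathcal O_X(-mrD))=\Gamma(X,\mathcal O_X(-\lceil mr\Gamma\rceil))$ is fine, but passing from there to $\Gamma(Y,\mathcal O_Y(-\lceil mr\psi^*\Gamma\rceil))$ requires showing that a section vanishes to the expected order along the $\psi$-exceptional divisors, which is where the hypothesis that $-\Gamma$ is nef (hence $-\psi^*\Gamma$ is nef) actually does its work; you flag this as the delicate step but it deserves an argument, not just a pointer. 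Second, since the $\gamma_{E_i}(D)$ can be irrational (as the paper's example shows), $\psi^*\Gamma$ need not be $\QQ$-Cartier; so it is not literally a ``valid candidate in the role of $G$,'' and $\gamma_E(\psi^*\Gamma)$ and Lemma~\ref{Lemma3} applied to $\psi^*\Gamma$ require the $\RR$-divisor extension (which holds by the same approximation, but the paper's choice to compute $\gamma_{F_i}$ of the honestly $\QQ$-Cartier divisor $\psi^*D$ avoids having to say so).
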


  \begin{proof} Let $F_1,\ldots,F_s$ be the irreducible exceptional divisors of $Y\rightarrow \mbox{Spec}(R)$.
    Since $-\sum\gamma_{E_i}(D)E_i$ is nef, we have that 
  $\sum \gamma_{F_i}(\psi^*(D))F_i=\psi^*(\sum \gamma_{E_i}(D)E_i)$.  Write $G=\sum g_iF_i$. Since $-G$ is nef, we have that $\gamma_{F_i}(G)=g_i$ for all $i$. Since
  $$
  \mathcal O_Y(-mrG)\subset \mathcal O_Y(-\psi^*(mrD))
  $$
   whenever $rG$, $rD$ are Cartier divisors and $m>0$, we have that $\gamma_{F_i}(D)\le \gamma_{F_i}(G)=g_i$ for all $i$. Thus
  $$
  -G\le -\sum\gamma_{F_i}(\psi^*(D))F_i=-\psi^*(\sum \gamma_{E_i}(D)E_i).
  $$
 \end{proof}
 
 The following proposition is a consequence of Proposition \ref{Prop2} and \ref{Lemma1}.
 
 \begin{Proposition}\label{Prop1} Suppose that $D_1,\ldots,D_d\in M^1(X)$ are effective $\QQ$-Cartier divisors such that the divisors $-\sum\gamma_{E_i}(D_j)E_i$ are nef for $1\le j\le d$.
  Then the positive intersection product $\langle-D_1\cdot,\ldots,\cdot -D_d\rangle$ is the ordinary intersection product
  $(-\sum\gamma_{E_i}(D_1)E_i\cdot\ldots\cdot-\sum\gamma_{E_i}(D_d)E_i)$. 
  \end{Proposition}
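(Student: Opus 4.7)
The plan is to deduce this directly from Proposition~\ref{Prop2} and Lemma~\ref{Lemma1}. Writing $D_j=\sum_i a_i^{(j)} E_i$, set $\Gamma_j:=-\sum_i \gamma_{E_i}(D_j)E_i$ for $1\le j\le d$. Since $\gamma_{E_i}(D_j)\ge a_i^{(j)}\ge 0$, each $\Gamma_j$ is anti-effective with $\Gamma_j\le -D_j$ in $M^1(X)$, and by hypothesis $\Gamma_j$ is nef. Hence $(\Gamma_1,\ldots,\Gamma_d)$ is admissible in the supremum of Proposition~\ref{Prop2}, giving at once the easy direction $\langle(-D_1)\cdot\ldots\cdot(-D_d)\rangle\ge(\Gamma_1\cdot\ldots\cdot\Gamma_d)$.

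For the reverse inequality I would fix an arbitrary admissible tuple $(\beta_1,\ldots,\beta_d)$, i.e.\ anti-effective nef $\beta_j\in M^1(R)$ with $\beta_j\le -D_j$. Using that $M^1(R)=\varinjlim M^1(Y_i)$, pass to a common model $\psi:Y\to X$ in $\mbox{BirMod}(R)$ on which every $\beta_j$ is a $\QQ$-Cartier divisor and $\beta_j\le\psi^*(-D_j)$. Applying Lemma~\ref{Lemma1} to $D=D_j$ and $G:=-\beta_j$ (which is effective and anti-nef because $\beta_j$ is anti-effective and nef), I obtain $\beta_j\le\psi^*(\Gamma_j)$ on $Y$ for each $j$. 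The projection formula identifies $(\psi^*(\Gamma_1)\cdot\ldots\cdot\psi^*(\Gamma_d))$ with $(\Gamma_1\cdot\ldots\cdot\Gamma_d)$, so it only remains to establish the inequality $(\beta_1\cdot\ldots\cdot\beta_d)\le(\psi^*(\Gamma_1)\cdot\ldots\cdot\psi^*(\Gamma_d))$ on $Y$; taking the supremum over admissible tuples then yields $\langle(-D_1)\cdot\ldots\cdot(-D_d)\rangle\le(\Gamma_1\cdot\ldots\cdot\Gamma_d)$ and closes the argument.

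The main obstacle is this last monotonicity step: enlarging each factor within the anti-effective nef cone on $Y$ should not decrease the intersection number. By a telescoping argument, replacing one $\beta_j$ by the larger $\psi^*(\Gamma_j)$ at a time, this reduces to the single claim that $(\epsilon\cdot\delta_2\cdot\ldots\cdot\delta_d)\ge 0$ whenever $\epsilon$ is an effective $\QQ$-Cartier divisor on $Y$ and $\delta_2,\ldots,\delta_d$ are anti-effective and nef. For $d=2$ this is immediate from the defining characterization of nef by intersection with exceptional curves. For general $d$, I would approximate each $\delta_k$ by anti-ample $\QQ$-Cartier divisors and use the continuity of the intersection form (the same continuity statement, \cite[Proposition 7.5]{C5}, that underlies the definition of the anti-positive intersection product), reducing to the anti-ample case where the positivity properties developed in \cite{C4} and \cite{C5} supply the required non-negativity.
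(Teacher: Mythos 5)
Your argument is correct and is exactly the intended way to deduce the proposition from Proposition~\ref{Prop2} and Lemma~\ref{Lemma1}; the paper records the statement as an immediate ``consequence'' without supplying the details you have filled in.

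One small note on the final step. The inequality $(\epsilon\cdot\delta_2\cdot\ldots\cdot\delta_d)\ge 0$ for $\epsilon$ effective with exceptional support and $\delta_2,\ldots,\delta_d$ anti-effective nef does not really need the ample-approximation detour (and the phrase ``anti-ample'' is a terminology slip — in this paper's convention ``ample'' already means anti-effective with relatively ample negation). The shortest path is to expand $\epsilon=\sum c_iE_i$ with $c_i\ge 0$ and observe that each $(E_i\cdot\delta_2\cdot\ldots\cdot\delta_d)$ is the intersection of the nef restrictions $\delta_k|_{E_i}$ on the projective variety $E_i$, hence $\ge 0$ by Kleiman's theorem; this is precisely the content of \cite[Lemma 3.11]{C4}, which is already cited as the foundation for the positivity theory used here. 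Equivalently, since both $\beta_j$ and $\psi^*\Gamma_j$ are nef, one can invoke the monotonicity of intersection products of nef classes (\cite[Proposition 3.12]{C4}) directly and skip the telescoping. Either way your proof goes through.
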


\section{Intersection theory on projective varieties} 
In this section we review some material on intersection theory on Projective varieties. We refer to \cite{Kl} and \cite{LV1}. 
Let $k$ be an algebraically closed field. Let $T$ be a nonsingular projective surface over $k$. Then 
$(\mbox{Pic}(T)/\equiv)\otimes\RR$, where $\equiv$ denotes numerical equivalence, is a finite dimensional real vector space. We will often abuse notation, identifying the class of an invertible sheaf $\mathcal O_T(D)$ with the class of the divisor $D$.

We will denote  the closure  of the real cone in $(\mbox{Pic}(T)/\equiv)\otimes\RR$ generated by the classes of effective divisors by  $\overline{\mbox{Eff}}(T)$ and   the closure  of the real cone in $(\mbox{Pic}(T)/\equiv)\otimes\RR$ generated by the classes of numerically effective divisors by $\overline{\mbox{Nef}}(T)$, and the closure of the real cone   in $(\mbox{Pic}(T)/\equiv)\otimes\RR$ generated by the classes of ample divisors by $\overline{\mbox{Amp}}(T)$.

If $V$ is a nonsingular $r$-dimensional projective variety and $D_1,\ldots,D_r$ are divisors on $V$ we will denote the intersection product of $D_1,\ldots,D_r$ on $V$ by $(D_1\cdot D_2\cdot\ldots\cdot D_r)_V$. When there is no danger of confusion about the ambient variety, we will simply write $(D_1\cdot D_2\cdot\ldots\cdot D_r)$.

\section{An example} In this section, we construct a resolution of singularities of a  three dimensional normal local ring, and compute the multiplicities and mixed multiplicities of its divisorial filtrations.   This resolution of singularities  is similar to    the one constructed in  \cite[Example 6]{CS}, which is used to give an example of a   filtration  of a divisorial valuation with irrational multiplicity.  In \cite[Example 6]{CS}, no details of the construction or analysis of the example are given.  We give complete details in this section.

Let $k$ be an algebraically closed field, Let
$W$ be an elliptic curve over $k$ and  $S$ be the abelian surface $S=W\times W$. Let $\pi_1:S\rightarrow W$ and $\pi_2:S\rightarrow W$ be the two projections. Let $p\in W$ be  a closed point, $A=\pi_1^{*}(p)$, $B=\pi_2^{*}(p)$ and $\Delta\subset W\times W=S$ be the diagonal. Let $V$ be the real subspace of $(\mbox{Pic}(S)/\equiv)\otimes\RR$  generated by the classes of $A,B$ and $\Delta$.   As shown in \cite{C7} and \cite[Example 4]{CS}, we have that $V$ has dimension 3 and 
$$
(\Delta^2)=(A^2)=(B^2)=0\mbox{ and }(A\cdot B)=(A\cdot\Delta)=(B\cdot \Delta)=1.
$$
Further, 
$\overline{\mbox{Amp}}(S)= \overline{\mbox{Eff}}(S)=\overline{\mbox{Nef}}(S)$, and $V\cap \overline{\mbox{Eff}}(S)$
is the real cone which is the component of 
$$
\{xA+yB+z\Delta\mid (xA+yB+z\Delta)^2\ge 0\}
$$
which contains the ample divisor $A+B+\Delta$.  

If $j,n\ge 0$ we have that 
\begin{equation}\label{eq1}
n(A+2B+ 3\Delta)-j(A+B+\Delta)\in \overline{\mbox{Eff}}(S)\mbox{ if and only if }j<n\left(2-\frac{\sqrt{3}}{3}\right).
\end{equation}
The canonical divisor of the Abelian surface $S$ is $K_S=0$.

Let $X$ be the projective bundle $X=\PP(\mathcal O_S(-3(A+2B+3\Delta))\oplus\mathcal O_S)$ with projection $\pi:X\rightarrow S$. Identify  the section of $\pi$ corresponding to the natural surjection of $\mathcal O_S$-modules
$$
\mathcal O_S(-3(A+2B+3\Delta))\oplus\mathcal O_S\rightarrow \mathcal O_S(-3(A+2B+3\Delta))
$$
with $S$ (c.f. \cite[Proposition 7.12]{Ha}).
Then $\mathcal O_X(1)\cong \mathcal O_X(S)$ and 
$$
\mathcal O_X(S)\otimes\mathcal O_S\cong \mathcal O_S(-3(A+2B+3\Delta)).
$$

A canonical divisor on $X$ is $K_X=-2S+\pi^*(-3(A+2B+3\Delta))$ (this can be seen by applying adjunction on a fiber of $\pi$ and then on the section $S$). The Picard group of $X$ is 
$$
\mbox{Pic}(X)=\mathcal O_X(1)\ZZ\oplus \pi^*\mbox{Pic}(S).
$$
Suppose that $\Gamma$ is an effective divisor on $X$. Then $\Gamma\sim nS+\pi^*(L)$ for some divisor $L$ on $X$. We have that  $n=(\Gamma\cdot g)\ge 0$ for a  fiber $g$ of $\pi$. Since $\Gamma$ is effective,
$$
\begin{array}{lll}
0<h^0(X,\mathcal O_X(\Gamma))&=&h^0(S,\mbox{Sym}^n\left(\mathcal O_S(-3(A+2B+3\Delta))\oplus\mathcal O_S\right)\otimes\mathcal O_S(L)))\\
&=&\sum_{i=0}^nh^0(S,\mathcal O_S(L-i3(A+2B+3\Delta))).
\end{array}
$$
Thus $L\in\overline{\mbox{Eff}}(S)$.

Let $T$ be the section of $\pi$ corresponding to the surjection of $\mathcal O_S$-modules 
$$
\mathcal O_S(-3(A+2B+3\Delta))\oplus \mathcal O_S\rightarrow \mathcal O_S.
$$
 Then $\mathcal O_X(1)\otimes\mathcal O_T\cong\mathcal O_S$ so that $T\cap S=\emptyset$. Further, 
$\mathcal O_X(T)\cong \mathcal O_X(S+3(A+2B+3\Delta))$. Now $3(A+2B+3\Delta)$ is very ample on $S$ (by the theorem in Section 17 \cite{Mum}), so the complete linear system $|T|$ on $X$ is base point free, and thus induces a projective morphism $X\rightarrow Z$ which contracts $S$. Suppose that $\gamma$ is an irreducible curve on $X$ which is not contained in $S$ and is not a fiber of $\pi$. Let $\overline \gamma$ be the image of $\gamma$ by $\pi$ in $S$ which is a curve. We have that 
$$
\begin{array}{lll}
(\gamma\cdot T)_X&=&(\gamma\cdot (S+\pi^*(3(A+2B+3\Delta))))_X\\
&=&\deg(\mathcal O_X(S+\pi^*(3(A+2B+3\Delta)))\otimes \mathcal O_{\gamma})\\
&\ge& \deg (\mathcal O_X(\pi^*(3(A+2B+3\Delta)))\otimes\mathcal O_{\gamma})\\
&=&\deg(\gamma/\overline\gamma)(\overline\gamma\cdot 3(A+2B+3\Delta))_S>0
\end{array}
$$
by the projection formula (\cite[Example 7.1.9]{F}), and since $A+2B+3\Delta$ is ample.
Thus $X\setminus S\rightarrow Z$ is finite to one. Let $\overline Z$ be the normalization of $Z$ in the function field of $X$. Then there is an induced birational projective morphism $\lambda:X\rightarrow \overline Z$ such that $S$ is contracted to a point $q$ of $\overline Z$ and $X\setminus S\rightarrow \overline Z-q$ is an isomorphism.
The divisor $-S$ is relatively ample for $\lambda$ since $\mathcal O_X(-S)\otimes \mathcal O_S$ is ample on $S$. Thus there exists $n>0$ such that $X$ is the blow up of the ideal sheaf $\lambda_*\mathcal O_X(-nS)$ of $\overline Z$, which has the property that the support of $\mathcal O_{\overline Z}/\lambda_*\mathcal O_X(-nS)$ is the point $q$.

The divisor $A+B+\Delta$ is ample on $S$ so $3(A+B+\Delta)$ is very ample (Theorem in Section 17 \cite{Mum}).
Thus by Bertini's theorem (\cite[Theorem II.8.18 and Remark III.7.9]{Ha})
there exists an integral and nonsingular curve $C$ on $S$ such that $C\sim 3(A+B+\Delta)$. Let $\mathcal I_C$ be the ideal sheaf of $C$ in $X$. Let $\tau:Y\rightarrow X$ be the blow up of $C$, so that $Y=\mbox{Proj}(\oplus_{n\ge 0}\mathcal I_C^n)$. Let $F=\mbox{Proj}(\oplus_{n\ge 0}\mathcal I_C^n/\mathcal I_C^{n+1})$ be the exceptional divisor and let $\overline\tau:F\rightarrow C$ be the induced morphism.  

The composed morphism $Y\rightarrow \overline Z$ contracts $F$ and the strict transform $\overline S$ of $S$ to the point $q$ of $\overline Z$ and is an isomorphism everywhere else, and is the blow up of an ideal sheaf $\mathcal I$ of $\mathcal O_Z$ such that the support of $\mathcal O_Z/\mathcal I$ is the point $q$.  The map $\tau$ induces an isomorphism of $\overline S$ and $S$. 

Let $G=\pi^*(C)$ which is an integral nonsingular surface in $X$. We have that $C$ is the scheme theoretic intersection of $G$ and $S$. Thus 
$$
(\mathcal O_X(-S)\otimes\mathcal O_C)\oplus (\mathcal O_X(-G)\otimes\mathcal O_C)\cong \mathcal I_C/\mathcal I_C^2,
$$
and so $F=\PP(\mathcal O_X(-S)\otimes\mathcal O_C)\oplus (\mathcal O_X(-G)\otimes\mathcal O_C))$.
We have that $\mathcal O_Y(-F)=\mathcal I_C\mathcal O_Y=\mathcal O_Y(1)$, so $\mathcal O_Y(-F)\otimes\mathcal O_F\cong \mathcal O_F(1)$. 

The Picard group of $F$ is
$$
\mbox{Pic}(F)=\mathcal O_F(1)\ZZ\oplus \overline\tau^*\mbox{Pic}(C).
$$

A canonical divisor of $Y$ is $K_Y=\tau^*K_X+F=-2\overline S-F+(\pi\tau)^*(-3(A+2B+3\Delta))$.

Since $\tau^*(S)=\overline S+F$ and $C$ is (isomorphic) to the scheme theoretic intersection of $F$ and $\overline S$, 
we have that 
$$
\mathcal O_Y(\overline S)\otimes\mathcal O_{\overline S}\cong \mathcal O_S(-3(2A+3B+4\Delta)).
$$
We have that 
$$
\mathcal O_Y(-n\overline S-jF)\otimes\mathcal O_{\overline S}\cong \mathcal O_X(-nS)\otimes\mathcal O_S(-(j-n)C)
$$
so by (\ref{eq1}),
\begin{equation}\label{eq2}
\mathcal O_Y(-n\overline S-jF)\otimes\mathcal O_{\overline S}\in \overline{\mbox{Eff}}(\overline S)
\mbox{ if and only if } j<n\left(3-\frac{\sqrt{3}}{3}\right).
\end{equation}

We have that $F\cong \PP(\mathcal E)$ where $\mathcal E=\mathcal O_C\oplus (\mathcal O_X(H)\otimes\mathcal O_C)$ with $H=S-G$ (by \cite[Proposition V.2.2]{Ha}). Let $C_0$ be the section of $\overline\tau$ corresponding to the natural surjection
of $\mathcal O_C$-modules $\mathcal E\rightarrow \mathcal O_X(H)\otimes\mathcal O_C$. Then $\mathcal O_F(C_0)\cong \mathcal O_{\PP(\mathcal E)}(1)$ and since 
$$
\mathcal O_X(H)\otimes\mathcal O_C\cong \mathcal O_S(-3(2A+3B+4\Delta))\otimes\mathcal O_C,
$$
we have that
$$
(C_0^2)_F=\deg \mathcal O_X(H)\otimes\mathcal O_C=(3(A+B+\Delta)\cdot3(-2A-3B-4\Delta))_S
=-9\times18=-162.
$$
Let $f$ be a fiber of $\overline\tau$ over a closed point of $C$. 

Let $K_F$ be a canonical divisor of $F$. By adjunction, we have
\begin{equation}\label{eq3}
\mathcal O_F(K_F)\cong \mathcal O_Y(K_Y+F)\otimes \mathcal O_F\cong\mathcal O_Y(-2\overline S+(\pi\tau)^*(-3(A+2B+3\Delta)))\otimes\mathcal O_F.
\end{equation}
 By adjunction on $f$ and $C_0$, 
 \begin{equation}\label{eq4}
 \mathcal O_F(K_F)\cong\mathcal O_F(-2C_0)\otimes\overline\tau^*(\mathcal O_C(K_C)\otimes\mathcal O_X(H)).
 \end{equation}
Let $C_{\overline S}$ be the scheme theoretic intersection of $\overline S$ and $F$, which is an integral curve which is a section over $\overline \tau$.
Comparing (\ref{eq3}) and (\ref{eq4}), we have that 
$$
\mathcal O_F(-2C_{\overline S})\cong \mathcal O_F(-2C_0)\otimes
\overline\tau^*(\mathcal O_C(K_C)\otimes \mathcal O_X(H)\otimes\mathcal O_S(3(A+2B+3\Delta)))
\cong \mathcal O_F(-2C_0)
$$
since $\mathcal O_C(K_C)\cong \mathcal O_X(G)\otimes\mathcal O_C$ by adjunction, as $\mathcal O_X(G)\otimes\mathcal O_S\cong \mathcal O_S(C)$. Since $(C_0^2)_F<0$, we have that $C_0=C_{\overline S}$.

We have that 
$$
\overline \tau^*(\mathcal O_X(S)\otimes \mathcal O_C)\cong \tau^*\mathcal O_X(S)\otimes\mathcal O_F
\cong \mathcal O_Y(\overline S+F)\otimes\mathcal O_F\cong \mathcal O_F(C_{\overline S})\otimes\mathcal O_Y(F).
$$
Thus 
$$
\mathcal O_Y(F)\otimes\mathcal O_F\cong \mathcal O_F(-C_0)\otimes\overline\tau^*(\mathcal O_X(S)\otimes\mathcal O_C),
$$
where 
$$
\begin{array}{lll}
\deg(\mathcal O_X(S)\otimes\mathcal O_C)&=&\deg \mathcal O_S(-3(A+2B+3\Delta))\otimes\mathcal O_C\\
&=&(-3(A+2B+3\Delta)\cdot 3(A+B+\Delta))\\
&=&-9\times12=-108.
\end{array}
$$
Thus $\mathcal O_Y(F)\otimes\mathcal O_F$ is represented in $(\mbox{Pic}(F)/\equiv)\otimes\RR$ by the class of 
$-C_0-108f$.

Suppose that $\gamma$ is an irreducible curve on $F$ which is not equal to $C_0$ and is not equal to a fiber over a closed point of $C$. There exists $n\in\ZZ$ and a divisor $\delta$ on $C$ such that $\gamma\sim nC_0+\overline\tau^*(\delta)$. Then $(\gamma\cdot f)>0$ implies $n>0$ and $(\gamma\cdot C_0)\ge 0$ implies $n(C_0^2)+\deg\delta\ge 0$. 
Thus $\deg \delta\ge -n(C_0)^2=n162>0$.

We now compute
$$
(\gamma^2)=n^2(C_0^2)+2n\deg\delta\ge n^2(C_0^2)-2n^2(C_0^2)=-n^2(C_0^2)>0.
$$
Thus $C_0$ is the only irreducible curve on $F$ with negative intersection number. 

It follows that 
  $\overline{\mbox{Eff}}(F)=\RR_+C_0+\RR_+f$ and 
  $$
  \overline{\mbox{Nef}}(F)=\{nC_0+mf\mid n,m\ge 0\mbox{ and }m\ge 162 n\}=
  \RR_+(C_0+162f)+\RR_+f.
  $$
  
  Let $j=n\left(3-\frac{\sqrt{3}}{3}\right)$. On $F$, we have the numerical equivalence
  $$
  (-n\overline S-jF)\cdot F\equiv (j-n)C_0+108jf
  =n \left(2-\frac{\sqrt{3}}{3}\right) C_0+108n \left(3-\frac{\sqrt{3}}{3}\right)f.
  $$
Letting $a=n \left(2-\frac{\sqrt{3}}{3}\right)$ and $b=108n \left(3-\frac{\sqrt{3}}{3}\right)$, we have that
$$
\frac{b}{a}=\frac{108}{33}\left(51+3\sqrt{3}\right)>162.
$$

Now suppose that $j=n$. Then 
$$
(-n\overline S-jF)\cdot F \equiv 108jf.
  $$
  
 The nature  of  the sections of $\mathcal O_Y(-n\overline S-jF)$ for $n,j\in\NN$ is determined by which of three separate regions of the positive quadrant of the plane contains the point $(n,j)$.  They are:
  \begin{enumerate}
  \item[1)] $j<n$
  \item[2)] $n\le j<n \left(3-\frac{\sqrt{3}}{3}\right)$ and
  \item[3)]   $n \left(3-\frac{\sqrt{3}}{3}\right)<j$.
  \end{enumerate}
    
  In case 1), 
  \begin{equation}\label{eq14}
  \mathcal O_Y(-n\overline S-jF)\otimes\mathcal O_F\not\in \overline{\mbox{Eff}}(F).
  \end{equation}
  In case 2), 
  \begin{equation}\label{eq15}
  \mathcal O_Y(-n\overline S-jF)\otimes\mathcal O_{\overline S}\in \overline{\mbox{Nef}}(\overline S)\mbox{ and }
  \mathcal O_Y(-n\overline S-jF)\otimes\mathcal O_F \in \overline{\mbox{Nef}}(F).
  \end{equation}
  In case 3), 
  \begin{equation}\label{eq16}
  \mathcal O_Y(-n\overline S-jF)\otimes\mathcal O_{\overline S}\not\in \overline{\mbox{Eff}}(\overline S).
  \end{equation}
  
  Let $R=\mathcal O_{\overline Z,q}$ ($q$ is the point on $\overline Z$ which $\overline S$ and $F$ contract to) and $U=Y\times_{\overline Z}\mbox{Spec}(R)$ with the natural projective morphism $U\rightarrow \mbox{Spec}(R)$ induced by $Y\rightarrow \overline Z$. The morphism $U\rightarrow\mbox{Spec}(R)$ is the blow up of the $m$-primary ideal $\mathcal I_q$. An effective Cartier divisor $D=n\overline S+jF$ on $U$ is anti-nef on $U$ if and only if $\mathcal O_Y(-D)\otimes\mathcal O_{\overline S}\in \overline{\mbox{Nef}}(\overline S)$ and $\mathcal O_Y(-D)\otimes\mathcal O_F\in\overline{\mbox{Nef}}(F)$. If $D$ is anti-nef on $U$, then  $\gamma_{\overline S}(D)=n$ and $\gamma_F(D)=j$ by Lemma \ref{Lemma3}.
   
  We deduce the following theorem.
  
  \begin{Theorem}\label{Theorem1} Let $D=n\overline S+jF$ with $j,n\in \NN$, an effective exceptional  divisor on $U$.
  \begin{enumerate}
  \item[1)] Suppose that $j<n$. Then  $\gamma_{\overline S}(D)=n$ and $\gamma_F(D)=n$.
  \item[2)]   Suppose that $n\le j<n \left(3-\frac{\sqrt{3}}{3}\right)$. Then   $\gamma_{\overline S}(D)=n$ and $\gamma_F(D)=j$.
  \item[3)] Suppose that $n \left(3-\frac{\sqrt{3}}{3}\right)<j$. Then   $\gamma_{\overline S}(D)=\frac{3}{9-\sqrt{3}}j$ and $\gamma_F(D)=j$.
  \end{enumerate}
  In all three cases, $-\gamma_{\overline S}(D)\overline S-\gamma_F(D)F$ is nef on $U$. 
    \end{Theorem}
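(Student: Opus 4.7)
The plan is to treat the three cases separately. Case~2) is essentially done: (\ref{eq15}) already states that $-D$ is nef on $U$, so Lemma~\ref{Lemma3} gives $\gamma_{\overline S}(D)=n$, $\gamma_F(D)=j$ at once, and the divisor $-\gamma_{\overline S}(D)\overline S-\gamma_F(D)F=-D$ is automatically nef.

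For case~1), I would introduce the comparison divisor $D'=n\overline S+nF$ lying on the boundary of case~2), verify $-D'$ is nef on $U$ (its restriction to $F$ is $108nf$ and its restriction to $\overline S$ is ample on the abelian surface), and then prove $I(mrD)=I(mrD')$ for every $m,r$ by iterating the restriction sequence
$$0\to\mathcal O_Y(-mrD-(k+1)F)\to\mathcal O_Y(-mrD-kF)\to\mathcal O_Y(-mrD-kF)|_F\to 0.$$
The key observation is that the restriction has numerical class $(mrj+k-mrn)C_0+108(mrj+k)f$, which pairs negatively with the nef fiber class $f$ whenever $k<mr(n-j)$; hence its $h^0$ vanishes and iterating from $k=0$ up to $k=mr(n-j)-1$ produces the desired equality. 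Lemma~\ref{Lemma3} applied to $D'$ then gives $\gamma_{\overline S}(D)=\gamma_F(D)=n$.

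Case~3) is where the real work lies, because the target value $\gamma_{\overline S}(D)=\lambda j$ with $\lambda=3/(9-\sqrt 3)$ is irrational, so there is no $\QQ$-Cartier envelope of $D$ to which Lemma~\ref{Lemma3} applies directly. My plan is to introduce $D''=\lambda j\overline S+jF$ and verify that $-D''$ is nef: on $F$ the condition reduces to $\lambda>\tfrac13$, while on $\overline S$ the class lies on the boundary of $\overline{\mbox{Nef}}(\overline S)=\overline{\mbox{Eff}}(\overline S)$ by the very choice of $\lambda$. For the upper bound $\gamma_{\overline S}(D)\le\lambda j$ I would approximate $\lambda$ from above by rationals $\lambda_n\downarrow\lambda$: each $D''_n=\lambda_n j\overline S+jF$ is then $\QQ$-Cartier, $-D''_n$ is strictly nef on $U$, and $D\le D''_n$, so Lemma~\ref{Lemma3} yields $\gamma_{\overline S}(D)\le\gamma_{\overline S}(D''_n)=\lambda_n j$; the limit gives the bound. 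For the lower bound $\gamma_{\overline S}(D)\ge\lambda j$ I would iterate the analogue of the case~1) restriction sequence, but in the $\overline S$-direction:
$$0\to\mathcal O_Y(-mrD-(k+1)\overline S)\to\mathcal O_Y(-mrD-k\overline S)\to\mathcal O_Y(-mrD-k\overline S)|_{\overline S}\to 0.$$
Using the isomorphism from the construction, the restriction is $\mathcal O_S\bigl(3(mrn+k)(A+2B+3\Delta)-3(mrj-mrn-k)(A+B+\Delta)\bigr)$; by~(\ref{eq1}), this class lies strictly outside $\overline{\mbox{Eff}}(S)$ precisely when $k<mr(\lambda j-n)$, and so its $h^0$ vanishes. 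Iterating up to $k=\lceil mr(\lambda j-n)\rceil-1$ forces every $f\in\Gamma(Y,\mathcal O_Y(-mrD))$ to satisfy $\nu_{\overline S}(f)\ge mrn+\lceil mr(\lambda j-n)\rceil\ge mr\lambda j$, which gives the desired lower bound on $\gamma_{\overline S}(D)$. The equality $\gamma_F(D)=j$ then follows from the trivial lower bound $\gamma_F(D)\ge j$ and the approximation estimate $\gamma_F(D)\le\gamma_F(D''_n)=j$.

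The main technical hurdle is the irrationality of $\lambda$ in case~3): it rules out a direct application of Lemma~\ref{Lemma3} and forces the upper bound to go through a rational approximation and the lower bound to rely on a careful vanishing argument on the abelian surface, where the equality $\overline{\mbox{Nef}}(S)=\overline{\mbox{Eff}}(S)$ is exactly what ensures that a class strictly outside the effective cone has no global sections. In all three cases, the nefness of $-\gamma_{\overline S}(D)\overline S-\gamma_F(D)F$ is already built into the construction of the comparison divisors $D'$ and $D''$.
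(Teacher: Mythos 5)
Your proposal is correct and follows the same route as the paper: the paper's own proof of Theorem~\ref{Theorem1} is a terse few lines asserting $\Gamma(U,\mathcal O_U(-mD))=\Gamma(U,\mathcal O_U(-nm(\overline S+F)))$ in case~1) and $\Gamma(U,\mathcal O_U(-mD))=\Gamma(U,\mathcal O_U(-\lceil m\tfrac{3j}{9-\sqrt 3}\overline S+mjF\rceil))$ in case~3) directly from (\ref{eq14}) and (\ref{eq16}) and then invoking the nefness of the comparison divisor together with Lemma~\ref{Lemma3}, and your restriction-sequence iteration in cases~1) and~3) plus the rational approximation from above in case~3) are exactly the details that make those assertions rigorous. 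One small point: for the nefness of $-D''|_F$ the condition is $1/3\le\lambda\le 1$, not just $\lambda>1/3$, but since $\lambda=3/(9-\sqrt 3)<1$ this does not affect the argument.
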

    
    \begin{proof} If $D$ is in case 1), then $\Gamma(U,\mathcal O_U(-nm(\overline S+F)))=\Gamma(U,\mathcal O_U(-mD))$ for all $m\in \NN$ by (\ref{eq14}) and $\overline S+F$ is anti-nef on $U$ by (\ref{eq15}). If $D$ is in case 2), then $D$ is anti-nef on $U$ by (\ref{eq15}). If $D$ is in case 3), then 
    $\Gamma(U,\mathcal O_U(-\lceil \frac{m3j}{9-\sqrt{3}}\overline S+mjF\rceil))=\Gamma(U,\mathcal O_U(-mD))$ for all $m\in\NN$ by (\ref{eq16}) and $\frac{3}{9-\sqrt{3}}\overline S+F$ is anti-nef on $U$ by (\ref{eq15}).
    \end{proof}

  From Theorem \ref{Theorem1}, (\ref{eq11}) and Proposition \ref{Prop1}, we have that 
   for $D=n\overline S+jF$ with $m,j\in\NN$,
   \begin{equation}\label{eq12}
  \lim_{m\rightarrow\infty}
  \frac{\ell_R(R/I(mD)}{m^3}=-\frac{((-\gamma_{\overline S}(D)\overline S-\gamma_F(D)F)^3)}{3!}, 
  \end{equation}
  and  we have by (\ref{eqV6}), Theorem \ref{Theorem1}, (\ref{eq10}) and Proposition \ref{Prop1} that 
  \begin{equation}\label{eq13}
  \begin{array}{l}
  \lim_{m\rightarrow\infty}\frac{\ell_R(R/I(mn\overline S)I(mjF))}{m^3}
   =\sum_{i_1+i_2=3}\frac{1}{i_1!i_2!}e_R(\mathcal I(1)^{[i_1]},\mathcal I(2)^{[i_2]})
   n^{i_1}j^{i_2}\\
  =\sum_{i_1+i_2=3}\frac{-1}{i_1!i_2!}\left((-\gamma_{\overline S}(\overline S)\overline S-\gamma_F(\overline S)F)^{i_1}\cdot(-\gamma_{\overline S}(F)\overline S-\gamma_F(F)F\right)^{i_2})n^{i_1}j^{i_2}\\
  =\sum_{i_1+i_2=3}\frac{-1}{i_1!i_2!}\left(\left(-\overline S-F\right)^{i_1}\cdot\left(-\frac{3}{9-\sqrt{3}}\overline S-F\right)^{i_2}
  \right)n^{i_1}j^{i_2}.
  \end{array}
    \end{equation}
    
    We now make equations (\ref{eq12}) and (\ref{eq13}) explicit.     
   To compute the necessary intersection numbers, we use the facts that
   $$
   \mathcal O_Y(\overline S)\otimes\mathcal O_{\overline S}\cong \mathcal O_S(-3(2A+3B+4\Delta)),\,\,
   \mathcal O_Y(F)\otimes\mathcal O_{\overline S}\cong \mathcal O_S(3(A+B+\Delta)),\,\,
   $$
   $$
   \mathcal O_Y(\overline S)\otimes\mathcal O_F\cong \mathcal O_F(C_0),\,\,
   \mathcal O_Y(F)\otimes\mathcal O_F\equiv \mathcal O_F(-C_0-108f),
   $$
  to calculate that
  $$
  \begin{array}{l}
  (\overline S^3)=(-3(2A+3B+4\Delta)\cdot -3(2A+3B+4\Delta))_S=9\times52=468\\
  (\overline S^2\cdot F)=(-3(2A+3B+4\Delta)\cdot 3(A+B+\Delta))_S=-9\times18=-162\\
  (\overline S\cdot F^2)=(3(A+B+\Delta)\cdot 3(A+B+\Delta))_S=9\times 6=54\\
  (F^3)=((-C_0-108f)\cdot(-C_0-108f))_F=9\times6 =54.
  \end{array}
  $$
  
 The formulas of  Theorems \ref{Theorem2} and \ref{Theorem3} of the introduction are now a consequence of Theorem \ref{Theorem1}, equations (\ref{eq12}) and (\ref{eq13}) and the above formulas computing intersection multiplicities.

\end{document}